\newtheorem{theorem}{Theorem}[section]
\newtheorem{corollary}{Corollary}
\theoremstyle{definition}
\newtheorem{definition}[theorem]{Definition}
\newtheorem{remark}{Remark}
\newtheorem{example}{Example}
\renewcommand{\le}{\leqslant}
\renewcommand{\ge}{\geqslant}
\definecolor{eqeqeq}{rgb}{0.8784313725490196,0.8784313725490196,0.8784313725490196}
\definecolor{yqyqyq}{rgb}{0.5019607843137255,0.5019607843137255,0.5019607843137255}
\definecolor{wqwqwq}{rgb}{0.3764705882352941,0.3764705882352941,0.3764705882352941}
\definecolor{aqaqaq}{rgb}{0.6274509803921569,0.6274509803921569,0.6274509803921569}
\definecolor{cqcqcq}{rgb}{0.7529411764705882,0.7529411764705882,0.7529411764705882}
\newcommand{\id}{\mathrm{Id}}
\newcommand{\ass}{\textrm{Ass}}
\newcommand{\vol}{\operatorname{vol}}
\newcommand{\fix}{\operatorname{Fix}}
\newcommand{\aut}{\operatorname{Aut}}
\renewcommand{\geq}{\geqslant}
\newcommand{\ent}{\mathsf{h}}
\newcommand{\relent}{\mathsf{h}_{\mathsf{rel}}}
\newcommand{\logmahler}{\mathsf{m}}
\newcommand{\laurent}{\mathbb{Z}[u_1^{\pm 1},\dots,u_d^{\pm 1}]}
\newcommand{\chull}{\mathcal{H}}
\newcommand{\wsyn}{S}
\newcommand{\ssyn}{S^\star}
\newcommand{\wrad}{r}
\newcommand{\srad}{r^\star}
\newcommand{\smeas}{\nu^\star}
\title[Directional uniformities, periodic points, and entropy]{Directional uniformities, periodic points, and entropy}
\author[Richard Miles and Thomas Ward]{}
\subjclass{Primary: 37B40, 37C25; Secondary: 37P35, 37C40}
\keywords{Directional dynamics; directional entropy; expansive subdynamics; algebraic dynamics}
\email{richard.miles@math.uu.se}
\email{t.b.ward@durham.ac.uk}
\begin{document}
\maketitle

\centerline{\scshape Richard Miles}
\medskip
{\footnotesize
\centerline{Uppsala Universitet}
 \centerline{L{\"a}gerhyddsv{\"a}gen 1, Hus 1, 5 och 7}
   \centerline{75106 Uppsala, Sweden}
}

\medskip

\centerline{\scshape Thomas Ward}
\medskip
{\footnotesize
% \centerline{Executive Office, Palatine Centre}
   \centerline{Durham University}
   \centerline{Durham DH1 3LE, UK}
}

\bigskip

\centerline{(Communicated by the associate editor name)}

\begin{abstract}
Dynamical systems generated by~$d\ge2$
commuting homeomorphisms (topological~$\mathbb{Z}^d$-actions)
contain within them
structures on many scales,
and in particular contain
many actions of~$\mathbb{Z}^k$ for~$1\le k\le d$.
Familiar dynamical invariants for
homeomorphisms, like entropy and
periodic point data, become more complex
and permit multiple definitions. We briefly
survey some of these and other related
invariants in the setting of
algebraic~$\mathbb{Z}^d$-actions,
showing how, even in
settings where
the natural entropy as a~$\mathbb{Z}^d$-action
vanishes, a powerful theory of
directional entropy
and periodic points
can be built. An underlying theme
is uniformity in dynamical
invariants as the direction changes,
and the connection between this
theory and problems in number theory;
we explore this for several
invariants.
We also highlight Fried's
notion of average entropy and its
connection to uniformities in growth
properties, and prove a new relationship
between this entropy and periodic point
growth in this setting.
\end{abstract}

\maketitle

\section{Introduction}

A topological dynamical system general by~$d\ge2$
commuting homeomorphisms
may be studied on multiple
scales. At one end it may be viewed as a single
action of~$\mathbb Z^d$, with global invariants for the action
like the joint topological entropy playing a central role, and
with well-known rigidity phenomena arising.
At the
opposite extreme it may be studied one direction at a
time, which essentially reduces to asking for
the relationship between properties
of one homeomorphism and properties of other
commuting ones. In attempting to navigate between these
two extremes, the first of which can lose the rich infomation
and structures present inside an abelian group action,
and the second of which loses touch with the
novel phenomena
that can only arise for higher-dimensional acting
groups, two thematic approaches arise naturally.

The first is \emph{subdynamics},
introduced by Boyle and Lind,
in which dynamical properties are
formulated for~$k$-dimensional subspaces of~$\mathbb R^d$,
and the whole action is studied via properties of its
subactions. Two particular features of this approach are:
\begin{enumerate}
\item the natural emergence of critical dimensions for
properties like entropy and expansiveness, leading
to notions of entropy rank and expansive rank;
\item a `subdynamics philosophy', that dynamical properties
of~$k$-dimensional subactions should be constant,
or should vary in a smooth or predictable fashion,
while the~$k$-space moves within connected components
of the subset of the Grassmanian manifold of~$k$-spaces in~$\mathbb R^d$
comprising the expansive~$k$-spaces.
\end{enumerate}
We refer to the work of Boyle and Lind~\cite{MR1355295} for
a detailed account of subdynamics,
and that of Einsiedler, Lind and the authors~\cite{MR1869066}
for an account in the algebraic setting.

The second is \emph{directional dynamics and uniformities}, in which
the philosophy of subdynamics is brought to bear on
specific questions in single directions as they vary. Here
two particular focal points arise:
\begin{enumerate}
\item uniformities, in which
the parameters describing dynamical properties are found to
have no `singularities' as the direction is changed, even across
non-expansive directions;
\item in contrast,
inverse subdynamics results, where non-expansive
directions are detected by changes in directional
dynamical properties.
\end{enumerate}

Our main focus is on algebraic~$\mathbb{Z}^d$-actions where every
element of the action has finite entropy, though essentially all the
questions we discuss have natural formulations for~$k$-dimensional
subactions in~$\mathbb Z^d$-actions with~$k$-dimensional subactions
of finite positive entropy. Roughly speaking, the questions
then become notationally and technically a little more difficult,
but the real obstacle to progress in this direction is that the
Diophantine results underpinning the methods are no longer available.
The actions we will be discussing in the main are
called \emph{entropy rank one actions} in the language
of Einsiedler and Lind~\cite{MR2031042},
whose work provides a
structure theorem for such actions.
Where we refer to actions of higher rank,
the reader should consult the papers of Boyle and Lind~\cite{MR1355295}
or Einsiedler, Lind and the authors~\cite{MR1869066} for more
information.

To help orient the reader, we briefly list some
standard
examples of the
type of systems we will be discussing.
\begin{enumerate}
\item The commuting endomorphisms~$x\longmapsto2x$
and~$x\longmapsto3x$ modulo~$1$ on the additive circle~$\mathbb{T}
=\mathbb{R}/\mathbb{Z}$ together generate an action of~$\mathbb{N}^2$.
The natural extension of this system is then a~$\mathbb{Z}^2$
action by continuous automorphisms of a compact metrizable
group. The action is mixing, essentially because of
the arithmetic independence condition
\[
2^i3^j=1\Longrightarrow i=j=0.
\]
The action has entropy rank one because each individual map
(corresponding to multiplication by the integer~$2^a3^b$
if~$a,b>0$) has finite
entropy, positive if~$(a,b)\neq(0,0)$.
A property that lies much deeper
is that the~$S$-unit theorem implies that
the system is also mixing of all orders,
as shown by Schmidt and the second author~\cite{MR1193598}.
\item If~$A,B\in{\rm{GL}}_3(\mathbb{Z})$ are commuting matrices with
the property that
\[
A^iB^j=I\Longrightarrow i=j=0
\]
then they together generate a~$\mathbb{Z}^2$-action~$\alpha$
by automorphisms
of the torus~$\mathbb{T}^3$, with the
action of~$\alpha^{(a,b)}$ being multiplication by~$A^aB^b$.
This will be a mixing action of
entropy rank one, and once again will be mixing of
all orders for the same Diophantine reason.
\item Higher entropy rank may be seen in the following
example.
Let
\[
X=\{x=(x_n)\mid x\in\mathbb{T}^3, n\in\mathbb{Z}\}=\left({\mathbb{T}^3}\right)^{\mathbb{Z}}
\]
and choose~$A,B$ as above.
Then we may define an action~$\alpha$ of~$\mathbb{Z}^3$ on~$X$
by defining
\[
\left(\alpha^{(a,b,c)}(x)\right)_k=A^aB^bx_{k+c}.
\]
That is, the first two coordinates~$(a,b)$ act on each
coordinate in the shift space as a toral automorphism,
and the third coordinate~$c$ acts as the shift on the
sequence space~$X$ with alphabet~$\mathbb{T}^3$.
Any single direction (represented by a point~$(a,b,c)\in\mathbb{Z}^3
\setminus\{(0,0,0)\}$) has infinite entropy
(indeed, any automorphism of
positive entropy commuting with a~$\mathbb{Z}^2$
action of completely positive entropy is
forced to have infinite
entropy by a result of Morris and the second
author~\cite{MR1656849}),
so the entropy rank is strictly greater than one.
This system is mixing, and has entropy rank two.
\item The natural shift action of~$\mathbb{Z}^2$ on the
compact metrizable group
\[
\{x=(x_{n,m})\in\{0,1\}^{\mathbb{Z}^2}\mid x_{n,m}+x_{n+1,m}+x_{n,m+1}=0\mbox{ mod }2
{\ }\forall{\ }m,n\in\mathbb Z\}
\]
is a~$\mathbb{Z}^2$-action by automorphisms of a compact
totally disconnected
group.
This also is mixing and has entropy rank one, but is not mixing
of all orders.
\end{enumerate}
For a general overview of systems of this sort, we refer to
the monograph of Schmidt~\cite{MR1345152}.

\section{Entropies and Lyapunov lists}\label{lyupunov_lists}

For a~$\mathbb{Z}^d$-action~$\alpha$ by automorphisms of a compact abelian group~$X$, the notion of entropy developed by
Milnor~\cite{MR955558} may be used to define a directional
entropy for any~$\mathbf{t}\in\mathbb{R}^d$. We denote this simply as~$\ent(\mathbf{t})$, as dependence on the action will always be clear. Whenever~$\mathbf{t}\in\mathbb{Z}^d$,
the value~$\ent(\mathbf{t})$ coincides with the topological entropy of
the single homeomorphism~$\alpha^\mathbf{t}$. For other values of~$\mathbf{t}$, the definition is a little technical, corresponding to the more general concept
of the entropy of
a~$k$-frame in the work of Milnor~\cite{MR955558}
(see also Boyle and Lind~\cite{MR1355295}),
with~$k=1$.
Instead of going back to the definition, we refer to a convenient formula for~$\ent(\mathbf{t})$,~$\mathbf{t}\in\mathbb{R}^d$, which is
analogous to the directional entropy formula for a smooth action of~$\mathbb{Z}^d\times\mathbb{R}^n$ on a compact manifold in terms of Lyapunov exponents. When~$\alpha$ has entropy rank one, the formula reveals the \emph{directional entropy function}~$\ent:\mathbb{R}^d\rightarrow\mathbb{R}_{\geqslant 0}$ to be a continuous piecewise linear function, and also a semi-norm on~$\mathbb{R}^d$. Although we are only concerned with entropy rank one actions here,
we note that Milnor's general framework provides a subtle insight into the structure of algebraic~$\mathbb{Z}^d$-actions where individual elements of the action may have infinite entropy,
described in the work of Einsiedler, Lind and the authors~\cite{MR1869066}.

To state the directional entropy formula, some preliminaries are required. The \emph{descending chain condition} on closed~$\alpha$-invariant subgroups of~$X$ is the condition that any chain of such subgroups
\[
X=X_0\supset X_1\supset X_2\supset\cdots
\]
stabilizes. When~$\alpha$ has entropy rank one,
this condition ensures that the dynamical system~$(X,\alpha)$
has a finite Lyapunov list, in the sense of Einsiedler
and Lind~\cite{MR2031042}, which we will proceed to describe.
Following Kitchens and Schmidt~\cite{MR1036904}, we
recall the standard correspondence between~$(X,\alpha)$ and a module~$M$ over the ring of Laurent polynomials~$R_d=\laurent$, obtained by identifying the application of each dual automorphism~$\widehat{\alpha}^\mathbf{n}$ with multiplication by~$u^\mathbf{n}=u_1^{n_1}\cdots u_d^{n_d}$ and extending this in a natural way to polynomials. The descending chain condition
is equivalent to the assumption that~$M$ is a Noetherian module,
in which case it will have a finite set of associated prime ideals~$\ass(M)$. Furthermore,
under the assumption that~$\alpha$ has entropy rank
one, Einsiedler and Lind~\cite[Prop.~6.1]{MR2031042}
showed that for each~$\mathfrak{p}\in\ass(M)$
that is not a maximal ideal, the domain~$R_d/\mathfrak{p}$ has Krull dimension~1,
meaning that the field of fractions
of~$R_d/\mathfrak{p}$ is a global field,
which we denote~$\mathbb{K}(\mathfrak{p})$.
Further background on global fields may be found in
the monograph of Cohn~\cite{MR1140705}, for example. There is a set of places~$\mathcal{P}(\mathbb{K}(\mathfrak{p}))$ corresponding to inequivalent absolute values defined on~$\mathbb{K}(\mathfrak{p})$, and we let~$|\cdot|_v$ denote the normalised absolute value corresponding to the place~$v$.
Set
\[
\mathcal{S}(\mathfrak{p})=\{v\in\mathcal{P}(\mathbb{K}(\mathfrak{p}))\mid|R_d/\mathfrak{p}|_v\mbox{ is an unbounded subset of }\mathbb{R}\}.
\]
and note that this is a finite set because~$R_d/\mathfrak{p}$ is finitely generated. Define the \emph{Lyapunov list} for the prime~$\mathfrak{p}$ by
\[
\mathcal{L}(\mathfrak{p})=\{(\log|\overline{u}_1|_v,\dots,\log|\overline{u}_d|_v)\mid v\in \mathcal{S}(\mathfrak{p})\},
\]
where~$\overline{u}_i$ denotes the image of~$u_i$ in the domain~$R_d/\mathfrak{p}$. To define the \emph{Lyapunov list}~$\mathcal{L}(\alpha)$ for~$(X,\alpha)$, we form the union of Lyapunov lists over all
those associated primes~$\mathfrak{p}\in\ass(M)$ that are not maximal ideals, with each~$\mathcal{L}(\mathfrak{p})$ repeated the same number of times as the dimension of the~$\mathbb{K}(\mathfrak{p})$-vector space~$M\otimes\mathbb{K}(\mathfrak{p})$. In this way, the Lyapunov list reflects exactly the list of domains of the form~$R_d/\mathfrak{p}$ with Krull dimension~1 that appear as factors in any prime filtration of the module~$M$
by Einsiedler and Lind~\cite[Lem.~8.2]{MR2031042}, and the only other possible factors in such a filtration are of the form~$R_d/\mathfrak{m}$, where~$\mathfrak{m}$ is a maximal ideal. Furthermore, if~$\alpha$
is also assumed to be mixing, then
if any such maximal ideals appear then they cannot be associated
primes by work of the first author~\cite[Lem.~3]{MR2308145}. For any~$\mathbf{n}\in\mathbb{R}^d$, the directional entropy is given by
Abramov's formula
\begin{equation}\label{directional_entropy_formula}
\ent(\mathbf{n})=\sum_{\mathbf{\boldsymbol{\ell}}\in\mathcal{L}(\alpha)}\max\{\boldsymbol{\ell}\cdot\mathbf{n},0\}.
\end{equation}
For further discussion and the proof of~\eqref{directional_entropy_formula}
in this context see
Einsiedler and Lind~\cite[Sec.~8]{MR2031042}; the original
calculation is essentially due to Abramov~\cite{MR0117322}
and a modern geometrical proof may be found in
a treatment by Lind and the second author~\cite{MR961739}.

Einsiedler and Lind applied their methods to give a similar formula for fibre entropies related to skew product transformations, and subsequently to give a formula for the relational entropy introduced by
Friedland~\cite{MR1411226}, which may be defined for any closed subset~$\mathcal{R}\subset X\times X$, as follows. Set
\[
\mathcal{X}(\mathcal{R})=\{x\in X^\mathbb{N}\mid(x_i,x_{i+1})\in\mathcal{R}\mbox{ for all }i\in\mathbb{N}\},
\]
and let~$\sigma_{\mathcal{X}(\mathcal{R})}$ denote the one-sided shift on~$\mathcal{X}(\mathcal{R})$. The \emph{relational entropy}~$\relent(\mathcal{R})$ of~$\mathcal{R}$ is the topological entropy of~$\sigma_{\mathcal{X}(\mathcal{R})}$. For example, if~$\mathcal{R}_A$ is the graph of a single automorphism~$A:X\rightarrow X$, then~$\relent(\mathcal{R}_A)$ is the topological entropy of~$A$.
Geller and Pollicott~\cite{MR1636888} considered relational entropy for the union of the graphs of two commuting transformations, and they
conjectured a formula for~$\relent(\mathcal{R}_A\cup\mathcal{R}_B)$ when~$A$ and~$B$ are commuting automorphisms of a torus~$\mathbb{T}^m$. This was corrected by Einsiedler and Lind, who show~\cite[Th.~2.4]{MR2031042} that, under the independence assumption
that~$\{x\in X\mid Ax=Bx\}$ has Haar measure zero,
\[
\relent(\mathcal{R}_A\cup\mathcal{R}_B)=
\max_{E\subset\{1,\dots,m\}}
\log\left(
\prod_{j\in E}e^{s_j}
+
\prod_{j\in E}e^{t_j}
\right)
,
\]
where~$(s_1,t_1),\dots,(s_m,t_m)$ are the Lyupunov vectors of the~$\mathbb{Z}^2$-action generated by~$A$ and~$B$.
This is also in fact the topological entropy of the skew product of~$A$ and~$B$ over the two shift, and Einsiedler and Lind provide more general formul\ae{\ }for relations using skew products and topological pressures.

Another notion of entropy, that we will show to
be closely related to the uniform growth of
periodic points for the systems we
study, was introduced by Fried~\cite{MR677244} for an action of a compactly generated Lie group on a probability space. This entropy is defined in our setting with Haar measure as the measure on~$X$, so that the corresponding measure theoretic entropy coincides with topological entropy. Firstly, note that the directional entropy
function~$\ent:\mathbb{R}^d\rightarrow\mathbb{R}_{\geqslant 0}$ is
clearly measurable by construction.
Fried's original definition includes the volume of the generalized
octahedron~$\{(t_i)\in\mathbb{R}^d\mid\sum_{i=1}^d|t_i|<1\}$,
which is~$\frac{2^d}{d!}$. For ease, we accommodate this preliminary calculation into the following.

\begin{definition}
Suppose that~$\alpha$ is an entropy rank one action with directional entropy function~$\ent:\mathbb{R}^d\rightarrow\mathbb{R}_{\geqslant 0}$ and let~$U(\alpha)=\{\mathbf{t}\in\mathbb{R}^d\mid\ent(\mathbf{t})\leqslant 1\}$.
The \emph{Fried average entropy} of~$\alpha$ is
\[
h^*(\alpha)=\frac{2^d}{d!\vol(U(\alpha))}.
\]
\end{definition}

In Section~\ref{bounds_and_uniformities}, we will see that
if~$\alpha$ is assumed to be mixing, then~$\ent$ is bounded away from zero on the unit sphere
\[
\mathbb{S}^{d-1}=\{\mathbf{t}\in\mathbb{R}^d\mid\Vert\mathbf{t}\Vert=1\},
\]
and it therefore
defines a norm on~$\mathbb{R}^d$
(throughout,~$\Vert\cdot\Vert$ will denote the Euclidean norm, and we continue to use the notation~$\ent(\cdot)$ when the directional entropy is a norm).
It then follows that~$U(\alpha)$ is the closed unit ball in this norm, and~$U(\alpha)$ is a convex polytope in~$\mathbb{R}^d$. In the setting of smooth~$\mathbb{Z}^d\times\mathbb{R}^n$-actions on compact manifolds,
A.~Katok, S.~Katok and Hertz~\cite{katok_fried_entopy} describe such details, together with numerous other aspects of Fried entropy, including consequences of the directional entropy formula (or \emph{Pesin entropy formula}, as it is
a special case of the more general
results for diffeomorphisms in the case that~$X$
is a manifold), lower bounds for~$h^*$, and a relationship with slow entropy and regulators in algebraic number fields.

\section{Directional entropy and Fried average entropy examples}

This section is devoted to visualising the unit ball~$U(\alpha)$ in the directional entropy norm and calculating the Fried average entropy for some well-known algebraic examples.

\begin{example}[The~$\times 2$,~$\times 3$ example]\label{times2_times3_example}
The compact group dual to the ring~$\mathbb{Z}[\frac{1}{6}]$ is a one dimensional solenoid~$X$. Via duality, both of the maps~$x\longmapsto 2x$
and~$x\longmapsto 3x$ are invertible and define automorphisms of~$X$ that generate a mixing~$\mathbb{Z}^2$-action~$\alpha$. Furthermore, the group~$X$ is locally
isometric to a product of an interval in~$\mathbb{R}$,
an open set in the~$2$-adic
numbers~$\mathbb{Q}_2$, and
an open set in
the~$3$-adic numbers~$\mathbb{Q}_3$.
As described
by Einsiedler and Lind~\cite[Ex.~6.7]{MR2031042},
the Lyapunov list for this
example is given by the prime ideal~$\mathfrak{p}=(u_1-2,u_2-3)\subset R_2$ and the set of places~$\{2,3,\infty\}$
in~$\mathcal{P}(\mathbb{Q})$, so that
\[
\mathcal{L}(\alpha)=
\langle
(\log|2|_2,\log|3|_2), (\log|2|_3,\log|3|_3), (\log|2|_\infty,\log|3|_\infty)
\rangle.
\]
The~$x$- and~$y$-axes, together with the line~$x\log 2+y\log 3=0$ divide the plane into 6 regions, as shown in Figure~\ref{times2_times3_diagram}. Starting in the first quadrant and moving anticlockwise, label these~$\mathcal{C}_1,\dots,\mathcal{C}_6$. Then, for any~$(x,y)\in\mathbb{R}^2$,
\[
\ent((x,y))=
\left\{
\begin{array}{ll}
|x|\log 2 + |y|\log 3 & \mbox{ for } (x,y)\in\mathcal{C}_1\cup\mathcal{C}_4;\\
|y|\log 3 & \mbox{ for } (x,y)\in\mathcal{C}_2\cup\mathcal{C}_5;\\
|x|\log 2 & \mbox{ for } (x,y)\in\mathcal{C}_3\cup\mathcal{C}_6.\\
\end{array}
\right.
\]
This formula is related to the expansive subdynamics of this system
described by Boyle and Lind~\cite[Ex.~6.5]{MR1355295}.
The directional entropy unit ball~$U(\alpha)$ is shown in Figure~\ref{times2_times3_diagram}, and we find
\[
\vol(U(\alpha))=\frac{3}{(\log 2)(\log 3)}
\]
so
\[
h^*(\alpha)=\frac{6}{(\log 2)(\log 3)}.
\]

\begin{center}
\begin{figure}[h]
\begin{tikzpicture}[line cap=round,line join=round,>=triangle 45,x=1.4cm,y=1.4cm]
\draw[->,color=black] (-2.731,0.0) -- (2.496,0.0);
\draw[->,color=black] (0.0,-1.449) -- (0.0,1.540);
\fill[line width=1.6pt,fill=black,fill opacity=0.1] (-1.44,0.0) -- (-1.43,0.91) -- (0.0,0.91) -- (1.44,0.0) -- (1.44,-0.91) -- (0.0,-0.91) -- cycle;
\draw [line width=1.0pt] (-1.44,0.0)-- (-1.43,0.91);
\draw [line width=1.0pt] (-1.43,0.91)-- (0.0,0.91);
\draw [line width=1.0pt] (0.0,0.91)-- (1.44,0.0);
\draw [line width=1.0pt] (1.44,0.0)-- (1.44,-0.91);
\draw [line width=1.0pt] (1.44,-0.91)-- (0.0,-0.91);
\draw [line width=1.0pt] (0.0,-0.91)-- (-1.44,0.0);
\draw (1.4,0.55) node[anchor=north west] {$\frac{1}{\log(2)}$};
\draw (-2.45,0.55) node[anchor=north west] {$-\frac{1}{\log(2)}$};
\draw (0.06,1.32) node[anchor=north west] {$\frac{1}{\log(3)}$};
\draw (0,-0.89) node[anchor=north west] {$-\frac{1}{\log(3)}$};
\draw [line width=1.0pt,dash pattern=on 2pt off 2pt] (-1.77,1.13)-- (1.79,-1.13);
\draw (2.45,0) node[anchor=west] {$x$};
\draw (0,1.47) node[anchor=south] {$y$};
\begin{scriptsize}
\draw [fill=black] (-1.44,0.0) circle (1.5pt);
\draw [fill=black] (0.0,0.91) circle (1.5pt);
\draw [fill=black] (1.44,0.0) circle (1.5pt);
\draw [fill=black] (0.0,-0.91) circle (1.5pt);
\end{scriptsize}
\end{tikzpicture}
\caption{The directional entropy unit ball for~$\times 2$~$\times 3$.\label{times2_times3_diagram}}
\end{figure}
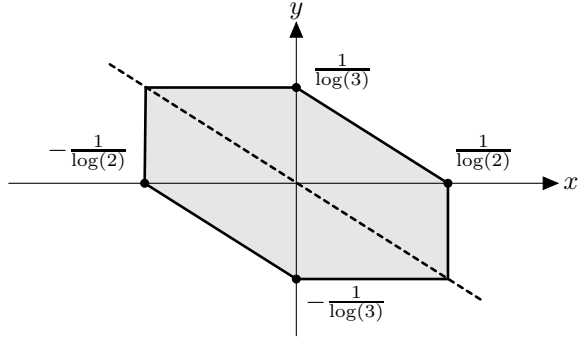
\end{center}
\end{example}

\begin{example}[Ledrappier's example]\label{ledrappiers_example}
The following example is one of a type
introduced by Ledrappier~\cite{MR512106} to
show that mixing does not imply higher-order
mixing for~$\mathbb Z^2$-actions with countable
Lebesgue spectrum.
The restriction of the full~$\mathbb{Z}^2$-shift to the compact abelian group
\[
\{x=(x_{i,j})\in\mathbb{F}_2^{\mathbb{Z}^2}\mid
x_{i,j}+x_{i+1,j}+x_{i,j+1}=0\mbox{ for all }(i,j)\in\mathbb{Z}^2\}
\]
is mixing, yet is not mixing of higher orders.
Just as in
the work of Boyle and Lind~\cite[Exs.~6.8, 8.1]{MR2031042},
Ledrappier's example corresponds to the~$R_2$-module~$R_2/\mathfrak{p}$, where~$\mathfrak{p}=(2, 1+u_1+u_2)$, and
\[
R_2/\mathfrak{p}\cong\mathbb{F}_2[t^{\pm 1},(1+t)^{\pm 1}],
\]
for an indeterminate~$t$, where the isomorphism is given by~$u_1\longmapsto t$ and~$u_2\longmapsto 1+t$.
The set of places~$\mathcal{S}(\mathfrak{p})$ comprises the usual infinite place of~$\mathbb{F}_2(t)$ corresponding to the degree of a polynomial, together with the places induced by the prime ideals~$(t)$ and~$(1+t)$
in~$\mathbb{F}_2[t^{\pm 1},(1+t)^{\pm 1}]$. These give rise to the Lyapunov list
\begin{eqnarray*}
\mathcal{L}(\alpha)
& = &
\langle
(\log|t|_v, \log|t+1|_v)\mid v=\infty,t,t+1
\rangle\\
& = &
\langle
(\log 2,\log 2),(-\log 2,0), (0,-\log 2)
\rangle.
\end{eqnarray*}
This time, the~$x$- and~$y$-axes
together with the line~$x+y=0$ divide the plane into 6 regions, which we label as before~$\mathcal{C}_1,\dots,\mathcal{C}_6$. In this case, for any~$(x,y)\in\mathbb{R}^2$,
\[
\ent((x,y))=
\left\{
\begin{array}{ll}
(|x| + |y|)\log 2 & \mbox{ for } (x,y)\in\mathcal{C}_1\cup\mathcal{C}_4;\\
|y|\log 2 & \mbox{ for } (x,y)\in\mathcal{C}_2\cup\mathcal{C}_5;\\
|x|\log 2 & \mbox{ for } (x,y)\in\mathcal{C}_3\cup\mathcal{C}_6.\\
\end{array}
\right.
\]
This results in the directional entropy unit ball~$U(\alpha)$ in Figure~\ref{ledrappier_diagram}, and we find
\[
\vol(U(\alpha))=\frac{3}{(\log 2)^2}
\]
so
\[
h^*(\alpha)=\frac{6}{(\log 2)^2}.
\]

\begin{center}
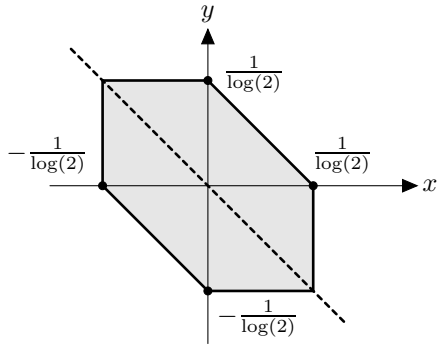
\begin{figure}[h]
\begin{tikzpicture}[line cap=round,line join=round,>=triangle 45,x=1.4cm,y=1.4cm]
\draw[->,color=black] (-1.5,0.0) -- (2.0,0.0);
\draw[->,color=black] (0.0,-1.5) -- (0.0,1.5);
\fill[line width=1.6pt,fill=black,fill opacity=0.1] (-1.0,0.0) -- (-1.0,1.0) -- (0.0,1.0) -- (1.0,0.0) -- (1.0,-1.0) -- (0.0,-1.0) -- cycle;
\draw [line width=1.0pt] (-1.0,0.0)-- (-1.0,1.0);
\draw [line width=1.0pt] (-1.0,1.0)-- (0.0,1.0);
\draw [line width=1.0pt] (0.0,1.0)-- (1.0,0.0);
\draw [line width=1.0pt] (1.0,0.0)-- (1.0,-1.0);
\draw [line width=1.0pt] (1.0,-1.0)-- (0.0,-1.0);
\draw [line width=1.0pt] (0.0,-1.0)-- (-1.0,0.0);
\draw (0.9,0.55) node[anchor=north west] {$\frac{1}{\log(2)}$};
\draw (-2.0,0.55) node[anchor=north west] {$-\frac{1}{\log(2)}$};
\draw (0.06,1.32) node[anchor=north west] {$\frac{1}{\log(2)}$};
\draw (0.0,-1.0) node[anchor=north west] {$-\frac{1}{\log(2)}$};
\draw [line width=1.0pt,dash pattern=on 2pt off 2pt] (-1.3,1.3)-- (1.3,-1.3);
\draw (1.95,0) node[anchor=west] {$x$};
\draw (0,1.47) node[anchor=south] {$y$};
\begin{scriptsize}
\draw [fill=black] (-1.0,0.0) circle (1.5pt);
\draw [fill=black] (0.0,1.0) circle (1.5pt);
\draw [fill=black] (1.0,0.0) circle (1.5pt);
\draw [fill=black] (0.0,-1.0) circle (1.5pt);
\end{scriptsize}
\end{tikzpicture}
\caption{The directional entropy unit ball for Ledrappier's example.\label{ledrappier_diagram}}
\end{figure}
\end{center}
\end{example}

\begin{example}[Commuting toral automorphisms]
The algebraic integers
\[
\xi_1=1+\sqrt{2},
\]
\[
\xi_2=2+\sqrt{5},
\]
and
\[
\xi_3=3+\sqrt{2}\sqrt{5}
\]
are units in the ring~$\mathbb{Z}[\sqrt{2},\sqrt{5}]$
(which is isomorphic to~$\mathbb{Z}^4$
as an additive abelian group).
The automorphisms dual to multiplication by each of~$\xi_1$,~$\xi_2$  and~$\xi_3$ are therefore
commuting hyperbolic automorphisms of the torus~$\mathbb{T}^4$. The Lyapunov list for this example is given by the prime ideal
\[
\mathfrak{p}
=
(u_1^2-2u_1-1,u_2^2-4u_2-1,u_3^2-6u_3-1)
\subset
R_3,
\]
and the set of places~$\mathcal{S}(\mathfrak{p})$,
which in this case correspond to the archimedean absolute values defined by~$|\cdot|_\tau=|\tau(\cdot)|$, where~$\tau\in\mathrm{Gal}(\mathbb{Q}(\sqrt{2},\sqrt{5})|\mathbb{Q})$ and~$|\cdot|$ is the usual absolute value on~$\mathbb{R}$. Therefore,
\[
\mathcal{L}(\alpha)
=
\langle
(\log|\tau(\xi_1)|, \log|\tau(\xi_2)|, \log|\tau(\xi_3)|)\mid
\tau\in\mathrm{Gal}(\mathbb{Q}(\sqrt{2},\sqrt{5})|\mathbb{Q})
\rangle
\]
and the directional entropy function is given by
\[
\ent(\mathbf{t})
=
\sum_{\tau\in\mathrm{Gal}(\mathbb{Q}(\sqrt{2},\sqrt{5})|\mathbb{Q})}
\max\{
(\log|\tau(\xi_1)|, \log|\tau(\xi_2)|, \log|\tau(\xi_3)|)
\cdot
\mathbf{t},
0\}.
\]
This results in the directional entropy unit ball~$U(\alpha)$ shown in Figure~\ref{toral_diagram}, which is a 14-faced polyhedron (a cuboid with 8 tetrahedal corners removed). For this polyhedron,
\[
\vol(U(\alpha))=\frac{5}{6(\log \xi_1)(\log \xi_3)(\log \xi_3)}
\]
and so
\[
h^*(\alpha)=\frac{10}{9(\log \xi_1)(\log \xi_3)(\log \xi_3)}.
\]

\begin{center}
\begin{figure}[h]
\begin{tikzpicture}[line cap=round,line join=round,>=triangle 45,x=1.0cm,y=1.0cm]
\fill[line width=0.4pt,dotted,color=cqcqcq,fill=cqcqcq,fill opacity=1.0] (0.54,3.96) -- (-0.44,2.6) -- (2.54,2.96) -- (2.9,4.24) -- cycle;
\fill[color=aqaqaq,fill=aqaqaq,fill opacity=1.0] (2.9,4.24) -- (3.82,2.94) -- (2.54,2.96) -- cycle;
\fill[color=wqwqwq,fill=wqwqwq,fill opacity=1.0] (0.54,3.96) -- (-1.14,2.24) -- (-0.44,2.6) -- cycle;
\fill[color=yqyqyq,fill=yqyqyq,fill opacity=1.0] (-1.14,2.24) -- (-0.24,0.72) -- (0.6,0.78) -- (-0.44,2.6) -- cycle;
\fill[color=yqyqyq,fill=yqyqyq,fill opacity=1.0] (0.6,0.78) -- (2.4,1.14) -- (3.82,2.94) -- (2.54,2.96) -- cycle;
\fill[color=eqeqeq,fill=eqeqeq,fill opacity=1.0] (-0.44,2.6) -- (0.6,0.78) -- (2.54,2.96) -- cycle;
\draw [dotted] (3.32,3.66)-- (-0.32,1.54);
\draw [dotted] (-0.02,3.44)-- (2.44,2.02);
\draw [dotted] (1.44,0.92)-- (1.44,3.46);
\draw[->] (1.44,3.46)-- (1.44,4.6);
\draw[->] (-0.32,1.54)-- (-1.46,0.88);
\draw[->] (2.44,2.02)-- (4.18,1.0);
\draw (-0.02,3.44)-- (-1.46,4.24);
\draw (3.32,3.66)-- (4.2,4.16);
\draw (1.44,0.92)-- (1.46,0.24);
\draw (-2.8,2.1) node[anchor=north west] {$\frac{1}{2\log(1+\sqrt{2})}$};
\draw (3.1,2.4) node[anchor=north west] {$\frac{1}{2\log(2+\sqrt{5})}$};
\draw (1.7,5.0) node[anchor=north west] {$\frac{1}{2\log(3+\sqrt{2}\sqrt{5})}$};
\draw (-1.4,1.1) node[anchor=north east] {$x$};
\draw (4.1,1.2) node[anchor=north west] {$y$};
\draw (1.45,4.55) node[anchor=south] {$z$};
\begin{scriptsize}
\draw [fill=black] (-0.32,1.54) circle (1.0pt);
\draw [fill=black] (2.44,2.02) circle (1.0pt);
\draw [fill=black] (1.44,3.46) circle (1.0pt);
\end{scriptsize}
\end{tikzpicture}
\caption{The directional entropy unit ball for 3 commuting toral automorphisms.\label{toral_diagram}}
\end{figure}
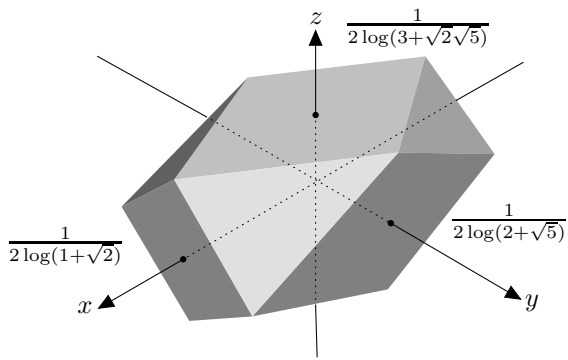\end{center}
\end{example}

\section{Directional entropy and periodic points}~\label{bounds_and_uniformities}

If the directional entropy function~$\ent$ restricted to the unit sphere~$\mathbb{S}^{d-1}$ has a positive lower bound, this may be interpreted as an indication of uniform growth in orbit complexity.

Before stating and proving a non-trivial result in this
direction, we recall that
the entropy of any mixing
automorphism of a compact group lies in the closure of the
set
\[
\{\logmahler(f)\mid\logmahler(f)>0,f\in\mathbb{Z}[x]\},
\]
and the classical \emph{Lehmer problem} asks
if
\[
\inf\{\logmahler(f)\mid\logmahler(f)>0,f\in\mathbb{Z}[x]\}>0.
\]
Moreover, if the topological dimension of~$X$
is finite, then the entropy of
any automorphism of~$X$ is given by an expression
of the form~$\logmahler(f)$ with~$f\in\mathbb{Z}[x]$
of degree no more than~$\dim(X)$.
We refer to the monograph of Everest
and the second author~\cite{MR1700272} for the
background and references on the
properties of
Mahler measure we will need, and to
the survey by the authors and Staines~\cite{MSW} for references
on how this relates to the
dynamical properties of group automorphisms.
What is known is that there is a lower bound
for the logarithmic Mahler measure in the following
sense. Blanskby and Montgomery~\cite{MR0296021}
and Dobrowolski~\cite{MR543210}
give explicit functions~$C:\mathbb{N}\to\mathbb{R}_{>0}$
for which
\[
\inf\{\logmahler(f)\mid\logmahler(f)>0,\deg(f)\le k\}
>C(k)>0.
\]
This means that there is a positive function~$H$ defined
on the space of all compact groups
that admit mixing automorphisms
with the property that any mixing automorphism~$\alpha:X\to X$
has
\[
\ent(\alpha)\ge H(X).
\]

The following result appears in part in the proof
of Theorem~1.1 in the authors' paper~\cite{MR2350424},
and shows
how this relates to the property of mixing in our setting.

\begin{theorem}\label{entropy_bounds_theorem}
Let~$\alpha$ be an entropy rank one~$\mathbb{Z}^d$-action by automorphisms of a compact abelian group~$X$. If~$(X,\alpha)$ satisfies the descending chain condition and is mixing, then there exist positive constants~$C_1, C_2$ such that~$C_1\leqslant\ent(\mathbf{t})\leqslant C_2$ for all~$\mathbf{t}\in\mathbb{S}^{d-1}$.
\end{theorem}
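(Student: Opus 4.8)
The plan is to show that the upper bound is essentially automatic and that all the substance lies in the positive lower bound, which amounts to proving that the seminorm $\ent$ is in fact a norm. For the upper bound, Abramov's formula~\eqref{directional_entropy_formula} writes $\ent$ as a finite sum of the nonnegative convex functions $\mathbf t\mapsto\max\{\boldsymbol\ell\cdot\mathbf t,0\}$; on the sphere each is bounded by $\Vert\boldsymbol\ell\Vert$ by Cauchy--Schwarz, so $C_2=\sum_{\boldsymbol\ell\in\mathcal L(\alpha)}\Vert\boldsymbol\ell\Vert$ works (equivalently, $\ent$ is continuous and $\mathbb S^{d-1}$ is compact). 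It remains to produce $C_1>0$, that is, to show $\ent(\mathbf t)>0$ for every $\mathbf t\neq\mathbf 0$; by continuity on the compact sphere this positive minimum is then attained.

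First I would check that $\ent$ is symmetric, hence a genuine seminorm. Since $\max\{-a,0\}=\max\{a,0\}-a$, one has $\ent(-\mathbf t)=\ent(\mathbf t)-\big(\sum_{\boldsymbol\ell\in\mathcal L(\alpha)}\boldsymbol\ell\big)\cdot\mathbf t$, so symmetry is equivalent to $\sum_{\boldsymbol\ell\in\mathcal L(\alpha)}\boldsymbol\ell=\mathbf 0$, which I would prove one prime at a time. For a non-maximal $\mathfrak p\in\ass(M)$ the images $\overline u_i$ are units in the domain $R_d/\mathfrak p$, hence $\mathcal S(\mathfrak p)$-units in the global field $\mathbb K(\mathfrak p)$; for any place $v\notin\mathcal S(\mathfrak p)$ boundedness of $R_d/\mathfrak p$ forces $|\overline u_i|_v=1$ (both $\overline u_i$ and $\overline u_i^{-1}$ lie in the bounded ring), so the Lyapunov contributions vanish off $\mathcal S(\mathfrak p)$. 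The product formula applied to $\overline{u^{\mathbf n}}$ then reads $\sum_{\boldsymbol\ell\in\mathcal L(\mathfrak p)}\boldsymbol\ell\cdot\mathbf n=0$ for every $\mathbf n\in\mathbb Z^d$, giving $\sum_{\boldsymbol\ell\in\mathcal L(\mathfrak p)}\boldsymbol\ell=\mathbf 0$; summing over the finitely many non-maximal associated primes yields the claim.

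With symmetry in hand, $\ent(\mathbf t)=0$ forces every term $\max\{\boldsymbol\ell\cdot\mathbf t,0\}$ to vanish for both $\pm\mathbf t$, whence $\boldsymbol\ell\cdot\mathbf t=0$ for all $\boldsymbol\ell\in\mathcal L(\alpha)$. So it suffices to exhibit one non-maximal associated prime whose Lyapunov vectors span $\mathbb R^d$; such a prime exists because $X$ is nontrivial and, by mixing together with the first author's lemma, no associated prime is maximal. Fixing it and writing $L(x)=(\log|x|_v)_{v\in\mathcal S(\mathfrak p)}$ for the $\mathcal S(\mathfrak p)$-unit logarithmic embedding, the Lyapunov vectors are exactly the rows of the matrix whose columns are $L(\overline u_1),\dots,L(\overline u_d)$, so spanning $\mathbb R^d$ is equivalent to the $\mathbb R$-linear independence of these $d$ columns. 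Here mixing enters decisively: it gives $\overline{u^{\mathbf n}}\neq1$ for all $\mathbf n\neq\mathbf 0$, and raising to a power shows that no $\overline{u^{\mathbf n}}$ with $\mathbf n\neq\mathbf 0$ can even be a root of unity, so the $\overline u_i$ are multiplicatively independent modulo the torsion subgroup of $\mathcal O_{\mathcal S(\mathfrak p)}^{*}$. By Dirichlet's $\mathcal S$-unit theorem the image $L(\mathcal O_{\mathcal S(\mathfrak p)}^{*})$ is a lattice with kernel exactly this torsion, so the $L(\overline u_i)$ are $\mathbb Z$-linearly independent points of a lattice; and $\mathbb Z$-independent lattice points are automatically $\mathbb R$-linearly independent. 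The columns are therefore independent, the Lyapunov vectors span $\mathbb R^d$, and combined with $\boldsymbol\ell\cdot\mathbf t=0$ for all $\boldsymbol\ell$ this forces $\mathbf t=\mathbf 0$.

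The step I expect to be the real obstacle is the passage from mixing to $\mathbb R$-linear independence of the log-vectors. Mixing only yields injectivity of $\mathbf n\mapsto\overline{u^{\mathbf n}}$ on $\mathbb Z^d$, which by itself is far weaker than the real independence needed: a real matrix can be injective on the integer lattice yet have nontrivial real kernel. The crux is to avoid any direct transcendence assertion about the individual numbers $\log|\overline u_i|_v$ and instead route everything through the lattice structure supplied by the $\mathcal S$-unit theorem, where $\mathbb Z$-independence upgrades to $\mathbb R$-independence for free. A secondary point requiring care is the bookkeeping that every place outside $\mathcal S(\mathfrak p)$ contributes trivially, so that the product formula collapses onto the finite Lyapunov list and delivers symmetry.
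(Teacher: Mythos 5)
Your proof is correct, but it takes a genuinely different route from the paper's. The paper argues by contradiction at a putative zero $\mathbf t$ of $\ent$ on the sphere: it approximates the ray through $\mathbf t$ by lattice points $\mathbf n_j$ using a Lipschitz estimate derived from~\eqref{directional_entropy_formula}, and then invokes a \emph{quantization} of single-element entropies --- for each associated prime, the nonzero values of $\ent(\mathbf n)$, $\mathbf n\in\mathbb Z^d$, are bounded below either by $\log p$ (when $\mathfrak p\cap\mathbb Z=p\mathbb Z$) or by a Lehmer-type bound (Blanksby--Montgomery, Dobrowolski) for Mahler measures of polynomials of bounded degree --- so that $\ent(\mathbf n_j)\to 0$ forces $\ent(\mathbf n_j)=0$ for some $j$, contradicting mixing via non-ergodicity of $\alpha^{\mathbf n_j}$. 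You instead prove directly that $\ent$ is a norm: symmetry via the product formula (after checking that places outside $\mathcal S(\mathfrak p)$ contribute trivially), so that $\ent(\mathbf t)=0$ kills every $\boldsymbol\ell\cdot\mathbf t$; and then spanning of the Lyapunov vectors of a single associated prime, obtained by converting the multiplicative independence of the $\overline u_i$ modulo torsion (which is exactly what mixing supplies) into $\mathbb R$-linear independence of the logarithmic vectors $L(\overline u_i)$ via the lattice structure given by Dirichlet's $\mathcal S$-unit theorem. That last upgrade from $\mathbb Z$- to $\mathbb R$-independence is the correct and crucial move, and you rightly flag that injectivity on $\mathbb Z^d$ alone would not suffice. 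What each approach buys: yours is self-contained at the level of classical algebraic number theory, needs no Lehmer-type input, and yields the norm property directly (which the paper records separately as a corollary); the paper's argument foregrounds the entropy gap for individual elements, which connects to the explicit lower bounds discussed just before the theorem and to the Baker-type estimates used later in Theorem~\ref{second_fixed_point_uniformity_theorem}. Both proofs obtain the final constant $C_1$ from continuity and compactness of $\mathbb S^{d-1}$.
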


\begin{proof}
Firstly note that since the restriction of~$\ent$ to~$\mathbb{S}^{d-1}$ is a continuous function on a compact set, it is bounded and attains its bounds. Hence, we need only show that~$C_1>0$ if~$\alpha$ is mixing.

Suppose there exists~$\mathbf{t}\in\mathbb{S}^d$ with~$\ent(\mathbf{t})=0$. In the simplest case, the line through~$\mathbf{t}$ intersects a non-zero point~$\mathbf{n}\in\mathbb{Z}^d$ giving~$\ent(\mathbf{n})=0$. It immediately follows that~$\alpha^\mathbf{n}$ is not ergodic and~$\alpha$ is not mixing. If the line does not intersect any such lattice point, then note that for any~$\mathbf{n}\in\mathbf{Z}^d$, using~\eqref{directional_entropy_formula} we have
\begin{equation}\label{and_i_look_across_the_water}
|\ent(\mathbf{n})-\ent(\mathbf{t})|\leqslant
\max_{(\ell_1,\dots,\ell_d)\in\mathcal{L}(\alpha)}
\max_{1\leqslant i\leqslant d}\{|\ell_i|\}\Vert\mathbf{n}-\mathbf{t}\Vert.
\end{equation}
Furthermore, since~$\ent(\lambda\mathbf{t})=0$ for all~$\lambda>0$, we may select a sequence of vectors~$\mathbf{n}_j$ in~$\mathbb{Z}^d$ and a sequence of scalars~$\lambda_j$ with~$\lim_{j\rightarrow\infty}\Vert\mathbf{n}_j-\lambda_j\mathbf{t}\Vert=0$, thus giving
\begin{equation}\label{and_i_think_of_all_the_things}
\lim_{j\rightarrow\infty}\ent(\mathbf{n}_j)=0,
\end{equation}
by~(\ref{and_i_look_across_the_water}).

If~$\ass(M)$ contains maximal ideal~$\mathfrak{p}$, then as noted in Section~\ref{lyupunov_lists},~$\alpha$ cannot be mixing. Hence, for a non-trivial case assume that~$\ass(M)$ contains no maximal ideals, so for each~$\mathfrak{p}\in\ass(M)$,~$R_d/\mathfrak{p}$ has Krull dimension 1. If~$\mathfrak{p}\cap\mathbb{Z}=p\mathbb{Z}$ for some rational prime~$p$, then the entropy addition formula shows that for each non-zero~$\mathbf{n}\in\mathbb{Z}^d$, either~$\ent(\mathbf{n})=0$ or~$\ent(\mathbf{n})\geqslant\log p$. It follows from~(\ref{and_i_think_of_all_the_things}) that~$\ent(\mathbf{n}_j)=0$ for some~$j$, and so~$\alpha^{\mathbf{n}_j}$ is not ergodic and~$\alpha$ is not mixing. The only remaining case is that~$\ass(M)$ contains a prime~$\mathfrak{p}$ for which~$\mathbb{K}(\mathfrak{p})$ is an algebraic number field. In this case, the addition formula for entropy shows that for each non-zero~$\mathbf{n}\in\mathbb{Z}^d$, either~$\ent(\mathbf{n})=0$ or~$\ent(\mathbf{n})\geqslant\logmahler(f)>0$, where~$\logmahler(f)$ denotes the logarithmic Mahler measure of some fixed polynomial~$f$, depending only on~$\mathfrak{p}$, of degree no greater than the degree of the field extension~$\mathbb{K}(\mathfrak{p})|\mathbb{Q}$. Reasoning identical to the previous case can now be applied. As noted at the start of this section,
there are explicit lower bounds for the non-zero logarithmic Mahler measure of polynomials of bounded degree.
\end{proof}

\begin{remark}
If~$\alpha$ is an entropy rank one~$\mathbb{Z}^d$-action by automorphisms of a compact abelian group~$X$, and~$\alpha$ is both non-mixing and \emph{irreducible} (that is, the only closed~$\alpha$-invariant subgroups of X are finite), then there exists a non-zero~$\mathbf{n}\in\mathbb{Z}^d$ such that~$\alpha^\mathbf{n}$ is not ergodic, so~$\ent(\mathbf{n})=0$, by irreducibility.  Since~\eqref{directional_entropy_formula} gives
\begin{equation}\label{scaling_formula}
\ent(\mathbf{n})=\Vert\mathbf{n}\Vert\ent(\widehat{\mathbf{n}}),
\end{equation}
it follows that~$\ent(\widehat{\mathbf{n}})=0$, where~$\widehat{\mathbf{n}}$ is a unit vector in the direction of~$\mathbf{n}$. Thus, the directional entropy function cannot be bounded away from zero in this case.
\end{remark}

For any closed~$\alpha$-invariant subgroup~$Y\subset X$, let~$\alpha_Y$ and~$\alpha_{X/Y}$ denote the induced actions of~$\alpha$ on~$Y$ and~$X/Y$ respectively. A standard dimension argument shows that for any mixing~$\mathbb{Z}^d$-action~$\alpha$ by automorphisms of a finite-dimensional compact connected abelian group~$X$, there exists a closed~$\alpha$-invariant subgroup~$Y\subset X$ such that~$(X/Y,\alpha_{X/Y})$ is mixing, satisfies the descending chain condition and, for all~$\mathbf{n}\in\mathbb{Z}^d$,~$h(\alpha_X^\mathbf{n})=
h(\alpha_{X/Y}^\mathbf{n})$, where~$h$ denotes the usual topological entropy. This follows from the Yuzvinski{\u\i} entropy addition formula and the fact that the subgroup~$Y$ may be selected so that~$h(\alpha_{Y}^\mathbf{n})=0$ for all~$\mathbf{n}\in\mathbb{Z}^d$.  Subsequently, the directional entropy functions are identical for~$\alpha_X$ and~$\alpha_{X/Y}$. Hence, we also have the following.

\begin{corollary}\label{solenoidal_bound_corollary}
If~$\alpha$ is a mixing~$\mathbb{Z}^d$-action by automorphisms of a compact connected finite-dimensional abelian group~$X$, then there exists a constant~$C>0$ such that~$\ent(\mathbf{t})>C$
for all~$\mathbf{t}\in\mathbb{S}^{d-1}$.
\end{corollary}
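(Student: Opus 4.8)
The plan is to reduce the statement to Theorem~\ref{entropy_bounds_theorem} by passing to a suitable quotient on which the descending chain condition holds. The point is that a finite-dimensional compact connected group need not itself satisfy the descending chain condition (its dual module over~$R_d$ need not be Noetherian), so Theorem~\ref{entropy_bounds_theorem} cannot be applied to~$(X,\alpha)$ directly. The real content of the corollary is therefore the observation that the directional entropy depends only on data that survive an entropy-preserving quotient.

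First I would record that, since~$X$ is finite-dimensional and connected, each~$\alpha^{\mathbf{n}}$ has finite entropy (bounded by the logarithmic Mahler measure of a polynomial of degree at most~$\dim X$), so~$\alpha$ is an entropy rank one action; the same holds for any quotient~$\alpha_{X/Y}$, since~$X/Y$ is again compact, connected, and of dimension at most~$\dim X$. Next I would invoke the reduction established immediately before the statement: there is a closed~$\alpha$-invariant subgroup~$Y\subset X$ for which~$(X/Y,\alpha_{X/Y})$ is mixing, satisfies the descending chain condition, and has~$h(\alpha_X^{\mathbf{n}})=h(\alpha_{X/Y}^{\mathbf{n}})$ for every~$\mathbf{n}\in\mathbb{Z}^d$, whence the directional entropy functions of~$\alpha_X$ and~$\alpha_{X/Y}$ agree on all of~$\mathbb{R}^d$.

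With this in hand the argument is immediate. Since~$\alpha_{X/Y}$ is entropy rank one, mixing, and satisfies the descending chain condition, Theorem~\ref{entropy_bounds_theorem} supplies a constant~$C_1>0$ with~$\ent(\mathbf{t})\ge C_1$ for all~$\mathbf{t}\in\mathbb{S}^{d-1}$, where~$\ent$ is computed for~$\alpha_{X/Y}$. Because this directional entropy function coincides with that of~$\alpha_X$, the same lower bound holds for~$\alpha_X$, and taking~$C=C_1/2>0$ yields the strict inequality~$\ent(\mathbf{t})>C$ on~$\mathbb{S}^{d-1}$, as required.

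The hard part, conceptually, is the claim that equality of the topological entropies at all lattice points~$\mathbf{n}\in\mathbb{Z}^d$ forces equality of the full directional entropy functions on~$\mathbb{R}^d$. I would justify this by noting that~$\ent$ is continuous and positively homogeneous of degree one by the scaling relation~\eqref{scaling_formula}, and that it agrees with the topological entropy~$h(\alpha^{\mathbf{n}})$ at every~$\mathbf{n}\in\mathbb{Z}^d$; since the directions~$\widehat{\mathbf{n}}$ of integer vectors are dense in~$\mathbb{S}^{d-1}$, two such functions that agree on~$\mathbb{Z}^d$ must agree everywhere. In the present setting this step is already granted by the preparatory discussion, so the corollary follows at once from Theorem~\ref{entropy_bounds_theorem}.
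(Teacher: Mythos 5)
Your proposal is correct and follows essentially the same route as the paper: reduce to an entropy-preserving quotient $(X/Y,\alpha_{X/Y})$ that is mixing and satisfies the descending chain condition (the reduction stated in the paragraph preceding the corollary), and then apply Theorem~\ref{entropy_bounds_theorem}. The extra justification you give for why agreement of the entropies at lattice points forces agreement of the directional entropy functions (continuity, homogeneity, and density of rational directions) is exactly the content the paper leaves implicit in the word ``Subsequently.''
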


\begin{corollary}
Let~$\alpha$ be a mixing~$\mathbb{Z}^d$-action by automorphisms of a compact abelian group~$X$. If~$X$ is connected and finite-dimensional, or if~$(X,\alpha)$ has entropy rank one and satisfies the descending chain condition, then the directional entropy function for~$\alpha$ defines a norm on~$\mathbb{R}^d$.
\end{corollary}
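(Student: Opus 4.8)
The plan is to verify directly that~$\ent$ satisfies the four axioms of a norm, treating the genuine content---positive-definiteness---as a consequence of the lower bounds already in hand. First I would secure access to the formula~\eqref{directional_entropy_formula} in both cases. In case~(2) this is immediate from the descending chain condition together with entropy rank one. In case~(1) the group~$X$ is finite-dimensional, so every~$\alpha^\mathbf{n}$ has finite entropy and the action has entropy rank one; passing to the quotient~$X/Y$ furnished by the dimension argument preceding Corollary~\ref{solenoidal_bound_corollary}, which shares the directional entropy function of~$\alpha_X$ and satisfies the descending chain condition, again puts~\eqref{directional_entropy_formula} at our disposal. In either case~$\ent$ is computed from a finite Lyapunov list.

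From the formula, the semi-norm properties come essentially for free: $\ent\geq0$ since each summand~$\max\{\boldsymbol{\ell}\cdot\mathbf{n},0\}$ is non-negative; positive homogeneity~$\ent(\lambda\mathbf{t})=\lambda\ent(\mathbf{t})$ for~$\lambda>0$ holds termwise; and subadditivity follows from the elementary inequality~$\max\{a+b,0\}\leq\max\{a,0\}+\max\{b,0\}$ applied to each~$\boldsymbol{\ell}$. For full homogeneity I would then record the symmetry~$\ent(-\mathbf{t})=\ent(\mathbf{t})$: for~$\mathbf{n}\in\mathbb{Z}^d$ the map~$\alpha^{-\mathbf{n}}$ is the inverse homeomorphism of~$\alpha^\mathbf{n}$ and hence has the same topological entropy, so~$\ent(\mathbf{n})=\ent(-\mathbf{n})$, and this extends to all of~$\mathbb{R}^d$ by continuity and positive homogeneity. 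Thus~$\ent$ is a semi-norm.

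It then remains only to rule out~$\ent(\mathbf{t})=0$ for~$\mathbf{t}\neq0$. By the scaling relation~\eqref{scaling_formula} it suffices to bound~$\ent$ below on the unit sphere, and this is exactly what has already been proved: Theorem~\ref{entropy_bounds_theorem} supplies a constant~$C_1>0$ with~$\ent\geq C_1$ on~$\mathbb{S}^{d-1}$ in case~(2), while Corollary~\ref{solenoidal_bound_corollary} supplies such a constant in case~(1). Hence~$\ent(\mathbf{t})=\Vert\mathbf{t}\Vert\,\ent(\widehat{\mathbf{t}})\geq C\Vert\mathbf{t}\Vert>0$ whenever~$\mathbf{t}\neq0$, which is positive-definiteness. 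Combined with the semi-norm properties, this shows~$\ent$ is a norm.

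I expect no serious obstacle here, precisely because the entire difficulty of the statement is concentrated in the positive lower bound on the sphere, which is the mixing argument carried out in Theorem~\ref{entropy_bounds_theorem}; there the Diophantine input (the Blanksby--Montgomery and Dobrowolski bounds) is what prevents~$\ent$ from accumulating at zero along rational directions approaching a null direction. The only points requiring any care are the reduction in case~(1) to a quotient satisfying the descending chain condition, so that~\eqref{directional_entropy_formula} genuinely applies, and the verification of the symmetry~$\ent(-\mathbf{t})=\ent(\mathbf{t})$ needed to upgrade positive homogeneity to absolute homogeneity; both are routine.
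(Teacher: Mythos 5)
Your proposal is correct and follows essentially the same route as the paper: the formula~\eqref{directional_entropy_formula} gives the semi-norm properties (which the paper leaves implicit and you spell out, including the symmetry~$\ent(-\mathbf{t})=\ent(\mathbf{t})$), and positive-definiteness is exactly the lower bound on~$\mathbb{S}^{d-1}$ from Theorem~\ref{entropy_bounds_theorem} in one case and Corollary~\ref{solenoidal_bound_corollary} in the other.
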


\begin{proof}
The formula~(\ref{directional_entropy_formula}) shows that~$\ent$ defines a semi-norm on~$\mathbb{R}^d$ and Theorem~\ref{entropy_bounds_theorem} and Corollary~\ref{solenoidal_bound_corollary}  show that~$\ent(\mathbf{t})=0$ if and only if~$\mathbf{t}=0$.
\end{proof}

The following theorem, taken from
work of the authors~\cite{MR2350424}, shows that the uniform growth just described is also evident in the periodic point data for elements of the action.
Interestingly, Pollicott~\cite{pollicott_growth_of_periodic_points} later used a different method to achieve a similar result for commuting toral automorphisms.
In the next section, we will obtain a more precise result, relating the uniform growth in the number of periodic points directly to the Fried average entropy.
For a $\mathbb{Z}^d$-action $\alpha$ by automorphisms of a compact abelian group $X$ and for any $\mathbf{n}\in\mathbf{Z}^d$, let
\[
\fix(\alpha^{\mathbf{n}})=\{x\in X\mid\alpha^{\mathbf{n}}(x)=x\}.
\]
The fixed points for individual elements of the action are called \emph{periodic points}. Although there is another notion of periodic point for a lattice in a~$\mathbb{Z}^d$-action
used in work of Lind, Schmidt and the second
author~\cite{MR1062797},
the periodic points for lattices are more sparse when~$\alpha$ is an entropy rank one action
and they do not reveal the rates of growth that are our primary
interest here. For a more precise statement and for further details, see the work of the first author~\cite{miles_lind_zeta}.
There is a convenient formula
for~$|\fix(\alpha^{\mathbf{n}})|$ using a product over places of global
fields due to the first author~\cite[Sec.~3]{MR2308145}, originating in earlier work of Chothi,
Everest and the second author~\cite{MR1461206}, that is an essential starting point in obtaining results such as the following. We will not use this formula explicitly until Section~\ref{fried_entropy_and_fixed_points}, and reserve its statement to the beginning of the proof of Theorem~\ref{second_fixed_point_uniformity_theorem}.

For a sequence~$(\mathbf{n}_j)$ in~$\mathbb{Z}^d$, write~$\mathbf{n}_j\rightarrow\infty$ to mean that~$\Vert\mathbf{n}_j\Vert\rightarrow\infty$
as~$j\rightarrow\infty$. This can also
be thought of as the sequence leaving finite sets.

\begin{theorem}[Miles and Ward~\cite{MR2350424}]\label{first_fixed_point_uniformity_theorem}
Let~$\alpha$ be a mixing~$\mathbb{Z}^d$-action by automorphisms of a compact abelian group~$X$. If~$(X,\alpha)$ has entropy rank one and satisfies the descending chain condition, then the quantities
\[
\limsup_{\mathbf{n}\rightarrow\infty}
\frac{1}{\Vert\mathbf{n}\Vert}\log |\fix(\alpha^\mathbf{n})|
\]
and
\[
\liminf_{\mathbf{n}\rightarrow\infty}
\frac{1}{\Vert\mathbf{n}\Vert}\log |\fix(\alpha^\mathbf{n})|
\]
are finite and strictly positive.
\end{theorem}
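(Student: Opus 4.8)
The plan is to compare $\log|\fix(\alpha^{\mathbf{n}})|$ directly with the directional entropy $\ent(\mathbf{n})$, using on one side the explicit product formula for $|\fix(\alpha^{\mathbf{n}})|$ over the places of the global fields $\mathbb{K}(\mathfrak{p})$, $\mathfrak{p}\in\ass(M)$, due to the first author~\cite{MR2308145}, and on the other side the two-sided bound $C_1\le\ent(\widehat{\mathbf{n}})\le C_2$ furnished by Theorem~\ref{entropy_bounds_theorem}. That formula expresses $\log|\fix(\alpha^{\mathbf{n}})|$ as a finite sum of terms $\log|\overline{u}^{\mathbf{n}}-1|_v$ over the places $v$ recorded (with multiplicity) in $\mathcal{L}(\alpha)$, where $\overline{u}^{\mathbf{n}}=\overline{u}_1^{n_1}\cdots\overline{u}_d^{n_d}$ and $\boldsymbol{\ell}=(\log|\overline{u}_1|_v,\dots,\log|\overline{u}_d|_v)$ is the associated Lyapunov vector. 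The guiding principle is that each such term should be close to $\max\{\boldsymbol{\ell}\cdot\mathbf{n},0\}$, whose sum over $\mathcal{L}(\alpha)$ is precisely $\ent(\mathbf{n})$ by~\eqref{directional_entropy_formula}.

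First I would establish the upper bound, which gives finiteness of the $\limsup$. At every place the triangle inequality (archimedean) or the ultrametric inequality (non-archimedean) yields $|\overline{u}^{\mathbf{n}}-1|_v\le 2\max\{|\overline{u}^{\mathbf{n}}|_v,1\}$, and summing the logarithms gives $\log|\fix(\alpha^{\mathbf{n}})|\le\ent(\mathbf{n})+O(1)$. By the scaling formula~\eqref{scaling_formula} and Theorem~\ref{entropy_bounds_theorem}, $\ent(\mathbf{n})=\Vert\mathbf{n}\Vert\,\ent(\widehat{\mathbf{n}})\le C_2\Vert\mathbf{n}\Vert$, so the $\limsup$ is at most $C_2<\infty$.

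For the lower bound I would show that the reverse comparison holds up to a negligible error. At a non-archimedean place with $|\overline{u}^{\mathbf{n}}|_v\neq1$ the ultrametric inequality gives $|\overline{u}^{\mathbf{n}}-1|_v=\max\{|\overline{u}^{\mathbf{n}}|_v,1\}$ exactly, so such places contribute precisely $\max\{\boldsymbol{\ell}\cdot\mathbf{n},0\}$; discrepancies can only arise at the finitely many archimedean places and at the neutral non-archimedean places where $|\overline{u}^{\mathbf{n}}|_v=1$, at which $\overline{u}^{\mathbf{n}}$ may lie close to $1$. This yields
\[
\log|\fix(\alpha^{\mathbf{n}})|\ge\ent(\mathbf{n})-\sum_{v}\bigl|\,\log|\overline{u}^{\mathbf{n}}-1|_v-\max\{\boldsymbol{\ell}\cdot\mathbf{n},0\}\,\bigr|,
\]
and it remains to bound the error sum by $o(\Vert\mathbf{n}\Vert)$.

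The hard part will be controlling this error sum, and this is exactly where the Diophantine input becomes essential. Each term is large only when $\overline{u}^{\mathbf{n}}$ is extremely close to $1$ in $|\cdot|_v$, so a lower bound for $|\overline{u}^{\mathbf{n}}-1|_v$ is nothing other than a lower bound for a linear form in logarithms. Mixing guarantees $\overline{u}^{\mathbf{n}}\neq1$ for $\mathbf{n}\neq\mathbf{0}$, so Baker's theorem in the archimedean case, and its $p$-adic analogue at the neutral places, give $|\overline{u}^{\mathbf{n}}-1|_v\ge\Vert\mathbf{n}\Vert^{-\kappa}$ for a constant $\kappa$ depending only on $\mathfrak{p}$. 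The error sum is therefore $O(\log\Vert\mathbf{n}\Vert)$, and combining this with $\ent(\mathbf{n})\ge C_1\Vert\mathbf{n}\Vert$ gives $\log|\fix(\alpha^{\mathbf{n}})|\ge C_1\Vert\mathbf{n}\Vert-O(\log\Vert\mathbf{n}\Vert)$. Dividing by $\Vert\mathbf{n}\Vert$ and letting $\mathbf{n}\to\infty$ shows the $\liminf$ is at least $C_1>0$; together with the finite $\limsup$ and the trivial inequality $\liminf\le\limsup$, this shows that all the stated quantities are finite and strictly positive.
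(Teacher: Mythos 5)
Your decomposition is exactly the one used in~\cite{MR2350424} and reprised in this paper's proof of Theorem~\ref{second_fixed_point_uniformity_theorem}: write $|\fix(\alpha^{\mathbf{n}})|=e^{\ent(\mathbf{n})}g(\mathbf{n})$ via the product formula over places, bound $g$ above by a constant (triangle/ultrametric inequality) and below by $A/\max\{|n_i|\}^{B}$ (Baker and Yu), and sandwich $\ent(\mathbf{n})$ between $C_1\Vert\mathbf{n}\Vert$ and $C_2\Vert\mathbf{n}\Vert$ using Theorem~\ref{entropy_bounds_theorem}. In characteristic zero every step you outline goes through, and this is the intended argument.

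The one step that genuinely fails under the hypotheses exactly as stated is the Diophantine lower bound. Baker's theorem and Yu's $p$-adic analogue concern algebraic numbers, so they require each field $\mathbb{K}(\mathfrak{p})$ to be a number field; this holds precisely when $\mathfrak{p}\cap\mathbb{Z}=\{0\}$ for every $\mathfrak{p}\in\ass(M)$, which is what connectedness of $X$ buys you --- a hypothesis present in~\cite{MR2350424} but dropped from the restatement above. When some $\mathbb{K}(\mathfrak{p})$ has positive characteristic there is no lower bound of the form $|\overline{u}^{\mathbf{n}}-1|_v\ge\Vert\mathbf{n}\Vert^{-\kappa}$, and indeed the conclusion itself fails: Ledrappier's example is mixing, has entropy rank one and satisfies the descending chain condition, yet $|\fix(\alpha^{(2^k,0)})|=1$ for all $k$ because $t^{2^k}-1=(t+1)^{2^k}$ over $\mathbb{F}_2$ (see Example~\ref{fudged_ledrappier}), so the $\liminf$ is zero. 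You should therefore either state the connectedness (equivalently, characteristic-zero) hypothesis explicitly or note that your appeal to Baker and Yu silently imposes it; with that caveat your proof matches the paper's.
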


The uniform growth in the number of periodic points demonstrated by Theorem~\ref{first_fixed_point_uniformity_theorem}
may not be immediately evident in a given
example. The array in Figure~\ref{23array}
is taken from~\cite{MR2350424} and shows~$|\fix(\alpha^{(n_1,n_2)})|$ for
the~$\times2,\times3$-action in Example~\ref{times2_times3_example}, with~$-5\leqslant n_1\leqslant 5$ and~$0\leqslant n_2\leqslant 5$. Note that for lattice points close to the line~$n_1\log 2 + n_2\log 3 = 0$ (indicated in
italics), it is not obvious that we have an exponential growth in the number of periodic points. The fact that we do have this exponential growth independently
of the direction chosen (or indeed, the path chosen) is a Diophantine
property of Baker type: it is in essence a statement about the possible
size of expressions like~$|2^a3^b-1|$ for integer points~$(a,b)$ chosen
to lie close to the line~$2^x3^y=1$ in terms of~$\Vert(a,b)\Vert$.

\begin{center}
\begin{figure}
\begin{tabular}{ccccccccccc}
211 & 227 & 235 & 239 & 241 & 121 & 485 & 971 & 1943 & 3887 & 7775\\
49 & 65 & 73 & 77 & 79 & 5 & 161 & 323 & 647 & 1295 & 2591\\
{\it 5} & 11 & 19 & 23 & 25 & 13 & 53 & 107 & 215 & 431 & 863\\
23 & {\it 7} & {\it 1} & 5 & 7 & 1 & 17 & 35 & 71 & 143 & 287\\
29 & 13 & 5 & {\it 1} & {\it 1} & 1 & 5 & 11 & 23 & 47 & 95\\
31 & 5 & 7 & 1 & 1 &~$\infty$ & 1 & 1 & 7 & 5 & 31
\end{tabular}
\caption{\label{23array}Periodic point counts in the $\times2,\times3$-action.}
\end{figure}
\end{center}

The periodic point data for elements of an entropy rank one action, such as that shown in Figure~\ref{23array}, featured in another role in the
earlier work of the authors~\cite{MR2279271}, where it was used to detect expansive subdynamics. Using results from work of the first author~\cite{MR2308145},
the poles and zeros of associated dynamical zeta functions
were used to reconstruct non-expansive directions and
expansive components (analogous to Weyl chambers for a smooth~$\mathbb{Z}^d\times\mathbb{R}^n$-action on a compact manifold).
We note the connection here with the directional entropy function, restricted to~$\mathbb{S}^{d-1}$, which is hiding behind the scenes. Figure~\ref{pole_and_zero_picture} is taken from~\cite{MR2279271} and gives a brief illustration of the approach used there via Example~\ref{times2_times3_example}.
For each region~$\mathcal{C}_i$ described in the example, let~$\mathcal{C}'_i$ denote the set of integral points in the open cone formed by the removal of the axes and the irrational line shown in Figure~\ref{times2_times3_diagram}.
If~$(n_1,n_2)\in\mathcal{C}_i$,~$1\leqslant i\leqslant 6$, then the automorphism~$\alpha^{(n_1,n_2)}$ is expansive and has a rational zeta function given by
\[
\zeta_{(n_1,n_2)}(z)
=
\left\{
\begin{array}{ll}
(1-z)/(1-2^{|n_1|}3^{|n_2|}z) & \mbox{ if } (n_1,n_2)\in \mathcal{C}'_1\cup \mathcal{C}'_4;\\
(1-2^{|n_1|}z)/(1-3^{|n_2|}z) & \mbox{ if }(n_1,n_2)\in \mathcal{C}'_2\cup \mathcal{C}'_5;\\
(1-3^{|n_2|}z)/(1-2^{|n_1|}z) & \mbox{ if }(n_1,n_2)\in \mathcal{C}'_3\cup \mathcal{C}'_6.
\end{array}
\right.
\]
For each~$\mathbf{n}\in\mathbb{Z}^2$, let~$\Psi_\mathbf{n}$ denote the set of poles and zeros of~$\zeta_\mathbf{n}$. Illustrated in Figure~\ref{pole_and_zero_picture} is the closure of the set
\[
\Omega_{\times 2 \times 3}=
\{(\widehat{\mathbf{n}}, |z|^{1/\Vert\mathbf{n}\Vert})\mid
 \mathbf{n}\in\mathcal{C}'_1\cup\dots\cup\mathcal{C}'_6, z\in\Psi_\mathbf{n}\}
\]
where we have parameterized~$\mathbb{S}^1$ by the angles~$0\leqslant\theta<2\pi$ along the horizontal axis and shown the second coordinate of~$\Omega_{\times 2 \times 3}$ on the vertical axis. Note the appearance of the directional entropy function: the lower curve in Figure~\ref{pole_and_zero_picture} is given by~$\ent((\sin \theta,\cos\theta))=-\log y$.

\begin{figure}[h]
\begin{center}
\begin{tikzpicture}[line cap=round,line join=round,>=triangle 45,x=1.0cm,y=4.0cm]
\draw[->,color=black] (-0.2,0.0) -- (6.4,0.0);
\foreach \x in {1.0,2.0,3.0,4.0,5.0,6.0}
\draw[shift={(\x,0)},color=black] (0pt,2pt) -- (0pt,-2pt) node[below] {\footnotesize~$\x$};
\draw[->,color=black] (0.0,-0.1) -- (0.0,1.1);
\foreach \y in {0.2,0.4,0.6,0.8, 1.0}
\draw[shift={(0,\y)},color=black] (2pt,0pt) -- (-2pt,0pt) node[left] {\footnotesize~$\y$};
\draw[color=black] (0pt,-10pt) node[right] {\footnotesize~$0$};
\clip(-0.1,-0.1) rectangle (6.8,1.2);
\draw (6.35,0.0) node[anchor=west] {$\theta$};
\draw (0.0,1.09) node[anchor=south] {$y$};
%C1 and C4
\draw[smooth,samples=200,domain=0:1.571] plot(\x,{1.0/(2.0^(abs(cos(((\x))*180/pi)))*3.0^(abs(sin(((\x))*180/pi))))});
\draw[smooth,samples=200,domain=3.142:4.712] plot(\x,{1.0/(2.0^(abs(cos(((\x))*180/pi)))*3.0^(abs(sin(((\x))*180/pi))))});
\draw[smooth,samples=200,domain=0:1.571] plot(\x,{1.0});
\draw[smooth,samples=200,domain=3.142:4.712] plot(\x,{1.0});
%C2, C3, C4, C5
\draw[smooth,samples=200,domain=1.571:3.142] plot(\x,{1.0/(2.0^(abs(cos(((\x))*180/pi))))});
\draw[smooth,samples=200,domain=1.571:3.142] plot(\x,{1.0/(3.0^(abs(sin(((\x))*180/pi))))});
\draw[smooth,samples=200,domain=4.712:6.283] plot(\x,{1.0/(2.0^(abs(cos(((\x))*180/pi))))});
\draw[smooth,samples=200,domain=4.712:6.283] plot(\x,{1.0/(3.0^(abs(sin(((\x))*180/pi))))});
\end{tikzpicture}
\caption{The closure of~$\Omega_{\times 2 \times 3}$ summarizing directional pole and zero data.\label{pole_and_zero_picture}}
\end{center}
\end{figure}
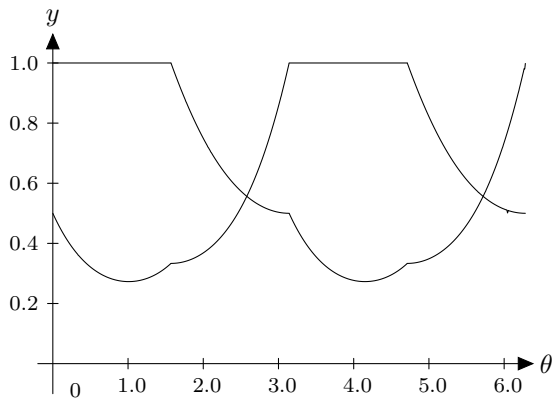

\section{Uniform growth of periodic points and Fried average entropy}~\label{fried_entropy_and_fixed_points}

As before, consider a mixing~$\mathbb{Z}^d$-action~$\alpha$ by automorphisms of a compact abelian group~$X$. Assume that~$X$ is connected and~$(X,\alpha)$ satisfies the descending chain condition.
We will be interested in the volume of the convex hull of the set of points in~$\mathbb{Z}^d$ that fix at most~$N>0$ points of~$X$ under the action. Hence, define~$\chull(N)$ to be the convex hull of the set
\[
\{\mathbf{n}\in\mathbb{Z}^d:|\fix(\alpha^\mathbf{n})|\leqslant N\}.
\]
The following theorem shows how the asymptotic volume of this convex hull is directly related to the Fried average entropy.

\begin{theorem}\label{second_fixed_point_uniformity_theorem}
Let~$\alpha$ be a mixing~$\mathbb{Z}^d$-action by automorphisms of a compact connected abelian group~$X$ of finite topological dimension such that~$(X,\alpha)$ satisfies the descending chain condition. Then
\[
\lim_{N\rightarrow\infty}\frac{\vol(\chull(N))}{(\log N)^d}=\vol(U(\alpha)).
\]
Hence, dividing the volume of the generalized octahedron in~$\mathbb{R}^d$ by this limit gives the Fried average entropy.
\end{theorem}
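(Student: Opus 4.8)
The plan is to show that $\log|\fix(\alpha^{\mathbf n})|$ tracks the directional entropy $\ent(\mathbf n)$ to within an error of size $O(\log\Vert\mathbf n\Vert)$, which is negligible beside $\ent(\mathbf n)\asymp\Vert\mathbf n\Vert$. Granting this, the sublevel set $\{\mathbf n:|\fix(\alpha^{\mathbf n})|\le N\}$ is trapped between the dilates $(1-o(1))(\log N)\,U(\alpha)$ and $(1+o(1))(\log N)\,U(\alpha)$ of the directional entropy unit ball, and since $\vol(tU(\alpha))=t^d\vol(U(\alpha))$ the stated limit drops out after passing to convex hulls. Note first that finite topological dimension forces every $\alpha^{\mathbf n}$ to have finite entropy, so $\alpha$ is of entropy rank one and formula~\eqref{directional_entropy_formula} applies; connectedness forces each field $\mathbb K(\mathfrak p)$ to have characteristic zero, so the places in $\mathcal S(\mathfrak p)$ are those of a number field.

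First I would bring in the product formula for $|\fix(\alpha^{\mathbf n})|$ reserved above, which writes $\log|\fix(\alpha^{\mathbf n})|$ as a sum, over the primes $\mathfrak p\in\ass(M)$ and the places $v\in\mathcal S(\mathfrak p)$ (with the Lyapunov multiplicities), of terms $\log|\overline u^{\mathbf n}-1|_v$. Comparing with $\ent(\mathbf n)=\sum_v\log^+|\overline u^{\mathbf n}|_v$ from \eqref{directional_entropy_formula}, the triangle and ultrametric inequalities give, place by place, $\log|\overline u^{\mathbf n}-1|_v\le\log^+|\overline u^{\mathbf n}|_v+O(1)$; summing over the finitely many pairs $(\mathfrak p,v)$ yields the easy upper estimate $\log|\fix(\alpha^{\mathbf n})|\le\ent(\mathbf n)+O(1)$.

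The crux is the matching lower estimate $\log|\fix(\alpha^{\mathbf n})|\ge\ent(\mathbf n)-O(\log\Vert\mathbf n\Vert)$, and this is where I expect the real work to lie. The difficulty is precisely that at a place $v$ where $|\overline u^{\mathbf n}|_v$ is close to $1$ — that is, for $\mathbf n$ lying near one of the walls $\boldsymbol\ell\cdot\mathbf n=0$ of the piecewise linear norm, as for the italicised entries of Figure~\ref{23array} — the value $\overline u^{\mathbf n}$ may be very close to $1$, making $|\overline u^{\mathbf n}-1|_v$ exponentially small a priori and $\log|\fix(\alpha^{\mathbf n})|$ far smaller than $\ent(\mathbf n)$. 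I would rule this out with a Baker-type bound on linear forms in logarithms, in both its archimedean and its $p$-adic forms: since the $\overline u_i$ are fixed algebraic numbers, whenever $\overline u^{\mathbf n}\ne1$ one has $\log|\overline u^{\mathbf n}-1|_v\ge-C\log\Vert\mathbf n\Vert$ with $C$ depending only on $\mathfrak p$ and $v$, the coefficient height of the relevant linear form being $\asymp\Vert\mathbf n\Vert$. Finite dimensionality is used here to bound the number and heights of the logarithms uniformly. Combined with the trivial estimate in the regime where $|\overline u^{\mathbf n}|_v$ is bounded away from $1$, this gives $\log|\overline u^{\mathbf n}-1|_v\ge\log^+|\overline u^{\mathbf n}|_v-C\log\Vert\mathbf n\Vert$ at each place, and summing yields $|\log|\fix(\alpha^{\mathbf n})|-\ent(\mathbf n)|\le C\log\Vert\mathbf n\Vert$ as $\mathbf n\to\infty$.

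With the two-sided estimate in hand, I would use that $\ent$ is a genuine norm (Corollary~\ref{solenoidal_bound_corollary}), so $c\Vert\mathbf n\Vert\le\ent(\mathbf n)\le C\Vert\mathbf n\Vert$, to convert it into set inclusions. If $|\fix(\alpha^{\mathbf n})|\le N$ then $\ent(\mathbf n)\le\log N+C\log\Vert\mathbf n\Vert$; the lower norm bound forces $\Vert\mathbf n\Vert=O(\log N)$, whence the error is $O(\log\log N)=o(\log N)$ and $\mathbf n\in(1+o(1))(\log N)U(\alpha)$. Conversely, for fixed $\epsilon>0$ the easy upper estimate shows that every lattice point of $(1-\epsilon)(\log N)U(\alpha)$ lies in $\chull(N)$ once $N$ is large. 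Taking convex hulls and volumes — bounding $\chull(N)$ above by the convex body $(1+o(1))(\log N)U(\alpha)$ that contains it, and below by the standard fact that the convex hull of the lattice points of a convex body $K$ fills a fraction $1-o(1)$ of $\vol(K)$ under dilation — gives
\[
(1-\epsilon)^d\vol(U(\alpha))\le\liminf_{N\to\infty}\frac{\vol(\chull(N))}{(\log N)^d}\le\limsup_{N\to\infty}\frac{\vol(\chull(N))}{(\log N)^d}\le\vol(U(\alpha)).
\]
Letting $\epsilon\to0$ yields the limit. The final assertion is then immediate, since the generalized octahedron has volume $2^d/d!$, so dividing it by $\vol(U(\alpha))$ recovers $h^*(\alpha)$ by definition.
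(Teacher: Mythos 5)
Your proposal is correct and follows essentially the same route as the paper: the same product formula over places, the same two-sided comparison of $\log|\fix(\alpha^{\mathbf n})|$ with $\ent(\mathbf n)$ (the $O(1)$ upper error being the paper's bound $g(\mathbf n)<2^D$ and the Baker/Yu lower bound $g(\mathbf n)>A/\max\{|n_i|\}^B$ giving your $O(\log\Vert\mathbf n\Vert)$ term), the norm bound from Theorem~\ref{entropy_bounds_theorem} to trap $\chull(N)$ between dilates of $U(\alpha)$, and the volume scaling of the entropy ball. Your explicit remark that the convex hull of the lattice points of a dilated convex body fills $(1-o(1))$ of its volume is if anything slightly more careful than the paper's corresponding step.
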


Before proving the theorem, we return to the familiar Example~\ref{times2_times3_example}.

\begin{example}
Since~$|\fix(\alpha^{-\mathbf{n}})|=|\fix(\alpha^{\mathbf{n}})|$, for all~$\mathbf{n}\in\mathbb{Z}^d$, we need only consider the
periodic point counts given in Table~\ref{table_of_fixed_point_counts}, where~$\mathcal{C}_1$,~$\mathcal{C}_2$ and~$\mathcal{C}_3$ refer to the regions in the upper half plane described in
Example~\ref{times2_times3_example} and shown in  Figure~\ref{times2_times3_diagram}.
\begin{center}
\begin{table}[h]
\caption{Periodic point counts for the~$\times 2$~$\times 3$ example.\label{table_of_fixed_point_counts}}
\begin{tabular}{l|l}
Location of~$(x,y)\in\mathbb{Z}^2$ &~$|\fix(\alpha^{(x,y)})|$\\
\hline
Interior of~$\mathcal{C}_1$ &~$2^x3^y-1$\\
Interior of~$\mathcal{C}_2$ &~$3^y|1-2^x 3^{-y}|$\\
Interior of~$\mathcal{C}_3$ &~$2^{|x|}|1-2^x 3^y|$\\
$x$-axis &~$(2^{|x|}-1)|2^{|x|}-1|_3$\\
$y$-axis &~$(3^{y}-1)|3^{y}-1|_2$\\
\end{tabular}
\end{table}
\end{center}
Now consider, for example, the integral points~$(x,y)$ in the interior of~$\mathcal{C}_3$ that satisfy~$|\fix(\alpha^{(x,y)})|\leqslant N$. Firstly, note that~$|1-2^x 3^y|<1$ for all points~$(x,y)$ in the interior of~$\mathcal{C}_3$. On the other hand, by Baker's theorem (see for example
Baker's monograph~\cite{MR1074572}
or the monograph of Everest
and the second author~\cite{MR1700272}),
there exist positive constants~$A$ and~$B$ such that when~$|x|$  is large,
\[
|1-2^x 3^y|>A/|x|^B.
\]
Suppose also that~$|x|>\frac{1}{\log 2}( \log N +(\log N)^{\delta})$, where~$0<\delta<1$ is close to~1.
Then, when~$N$ is large, in the interior of~$\mathcal{C}_3$
\[
|\fix(\alpha^{(x,y)})|
=
2^{|x|}|1-2^x 3^y|
>
N\cdot
\frac{A(\log 2)^B\cdot e^{(\log N)^{\delta}}}
{(\log N +(\log N)^{\delta})^B}
>
N
\]
and along the negative~$x$-axis
\[
|\fix(\alpha^{(x,y)})|=
(2^{|x|}-1)|2^{|x|}-1|_3
>
(N\cdot e^{(\log N)^{\delta}}-1) \frac{\log 2}{3(\log N +(\log N)^{\delta})}
>
N.
\]

Similar calculations can be performed in the other five regions described in Example~\ref{times2_times3_example} and it follows that~$\chull(N)$ lies inside a convex region similar to the one shaded in Figure~\ref{times2_times3_diagram}, with a boundary that intercepts the axes at~$x=\pm\frac{1}{\log 2}( \log N +(\log N)^{\delta})$ and ~$y=\pm\frac{1}{\log 3}( \log N +(\log N)^{\delta})$. Also, if~$|x|<\frac{\log N}{\log 2}$, then
$|\fix(\alpha^{(x,y)})|<N$ (and this is also true if~$(x,y)$ lies on the negative~$x$-axis with~$|x|<\frac{\log N}{\log 2}$). Similar reasoning applies for the other five regions, so~$\chull(N)$ contains a convex region similar to the one shaded in Figure~\ref{times2_times3_diagram} with a boundary that intercepts the axes at~$x=\pm \frac{\log N}{\log 2}$ and~$y=\pm\frac{\log N}{\log 3}$. Thus,
\[
\frac{3(\log N)^2}{(\log 2)(\log 3)}<\vol(\chull(N))<\frac{3(\log N +(\log N)^{\delta})^2}{(\log 2)(\log 3)},
\]
giving
\[
\lim_{N\rightarrow\infty}\frac{\vol(\chull(N))}{(\log N)^2}=
\frac{3}{(\log 2)(\log 3)}
=\vol(U(\alpha)).
\]
\end{example}

\begin{proof}[Proof of Theorem~\ref{second_fixed_point_uniformity_theorem}]
We use the notation introduced for entropy rank one actions in Section~\ref{lyupunov_lists}. For any non-zero~$\mathbf{n}\in\mathbb{Z}^d$, the periodic point counting formul\ae{\ }from
Miles~\cite[Sec.~3]{MR2308145} give
\begin{equation}\label{what_youre_doing_in_my_head}
|\fix(\alpha^{\mathbf{n}})|=
\prod_{\mathfrak{p}\in\ass(M)}
\prod_{v\in \mathcal{S}(\mathfrak{p})}
|\overline{u}^\mathbf{n}-1|^{m(\mathfrak{p})}_v
\end{equation}
where~$\overline{u}^\mathbf{n}$ denotes the image of~$u_1^{n_1}\cdots u_d^{n_d}$ in the domain~$R_d/\mathfrak{p}$
and~$m(\mathfrak{p})$ is the dimension of the~$\mathbb{K}(\mathfrak{p})$-vector space~$M\otimes\mathbb{K}(\mathfrak{p})$. Using~(\ref{directional_entropy_formula}),~(\ref{what_youre_doing_in_my_head}) can be rewritten in the form
\begin{equation}\label{entropy_fixed_point_formula}
|\fix(\alpha^{\mathbf{n}})|=e^{\ent(\mathbf{n})}g(\mathbf{n}),
\end{equation}
where~$g:\mathbb{Z}^d\rightarrow \mathbb{R}_{\geqslant 0}$ is given by
\[
g(\mathbf{n})
=
\prod_{\mathfrak{p}\in\ass(M)}
\prod_{v\in \mathcal{S}(\mathfrak{p})}
|1-\phi_{\mathfrak{p},v}(\mathbf{n})|^{m(\mathfrak{p})}_v
\]
and
\[
\phi_{\mathfrak{p},v}(\mathbf{n})=\left\{
\begin{array}{ll}
\overline{u}^{-\mathbf{n}} & \mbox{ if } |\overline{u}^{\mathbf{n}}|_v>1,\\
\overline{u}^{\mathbf{n}} & \mbox{ if }|\overline{u}^{\mathbf{n}}|_v\leqslant 1.
\end{array}
\right.
\]
Note that~$g(\mathbf{n})<2^D$, where~$D=|\ass(M)|\prod_{\mathfrak{p}\in\ass(M)}m(\mathfrak{p})$, and
that the mixing assumption ensures that~$\overline{u}^\mathbf{n}\neq 1$, so~$g(\mathbf{n})>0$. Furthermore, just as in the proof
of Theorem~1.1 in the work of the
authors~\cite{MR2350424},
the theorems of Baker~\cite{MR1074572} and
Yu~\cite{MR1055245} can be used to show that there exist positive constants~$A$ and~$B$ such that
\[
g(\mathbf{n})> A/\max\{|n_i|\}^B.
\]
Denote the closed ball of radius~$r>0$ in the directional entropy norm by
\[
B(r)=\{\mathbf{t}\in\mathbb{R}^d\mid\ent(\mathbf{t})\leqslant r\},
\]
so~$U(\alpha)=B(1)$. Each such ball is a convex polytope in~$\mathbb{R}^d$. We will show that for large~$N$,~$\chull(N)$ is trapped between~$B(\log(N/2^D))$ and~$B(\log N +(\log N)^{\delta})$, where~$0<\delta<1$ is a fixed constant close to~1.

On the one hand, if~$N$ is large enough so that~$N>2^D$ and~$\ent(\mathbf{n})\leqslant\log(N/2^D)$, then~(\ref{entropy_fixed_point_formula}) gives
$|\fix(\alpha^{\mathbf{n}})|\leqslant N$,
so~$B(\log(N/2^D))\subset \chull(N)$.

On the other hand, assume that~$\ent(\mathbf{n})>\log N +(\log N)^{\delta}$. Theorem~\ref{entropy_bounds_theorem} shows that there is a constant~$C_1>0$ such that~$\ent(\mathbf{t})>C_1$ for all~$\mathbf{t}\in\mathbb{S}^{d-1}$. Therefore, for any non-zero~$\mathbf{n}\in\mathbb{Z}^d$,
\[
\ent(\mathbf{n})=
\Vert\mathbf{n}\Vert
\sum_{\boldsymbol{\ell}\in\mathcal{L}(\alpha)}
\max\{\boldsymbol{\ell}\cdot\mathbf{\widehat{n}},0\}
>C_1\Vert\mathbf{n}\Vert\geqslant C_1\max\{|n_i|\}.
\]
Hence, using~(\ref{entropy_fixed_point_formula}),
$|\fix(\alpha^{\mathbf{n}})|$ is at least as large as
\[
e^{\ent(\mathbf{n})} g(\mathbf{n})
>
\frac{Ae^{\ent(\mathbf{n})}}{\max\{|n_i|\}^B}
>
\frac{AC_1^Be^{\ent(\mathbf{n})}}{\ent(\mathbf{n})^B}
>
N\cdot\frac{AC_1^B e^{(\log N)^{\delta}}}{(\log N +(\log N)^{\delta})^B}
>
N,
\]
when~$N$ is large. That is,
\[
\{\mathbf{n}\in\mathbb{Z}^d\mid|\fix(\alpha^\mathbf{n})|\leqslant N\}
\subset
B(\log N +(\log N)^{\delta})
\]
and, since this ball is convex,
\[
\chull(N)\subset B(\log N +(\log N)^{\delta}).
\]
It follows that
\[
\vol(B(\log(N/2^D)))
\leqslant
\vol(\chull(N))
\leqslant
\vol(B(\log N +(\log N)^{\delta})).
\]
Thus,
\[
\vol(B(1))\frac{(\log N-D\log 2)^d}{(\log N)^d}
\leqslant
\frac{\vol(\chull(N))}{(\log N)^d}
\leqslant
\vol(B(1))\frac{(\log N +(\log N)^{\delta})^d}{(\log N)^d},
\]
so that
\[
\lim_{N\rightarrow\infty}\frac{\vol(\chull(N))}{(\log N)^d}=\vol(B(1)).
\]
\end{proof}

The assumption of connectedness is essential in Theorem~\ref{second_fixed_point_uniformity_theorem}, as the following example demonstrates.

\begin{example}\label{fudged_ledrappier}
Consider again Ledrappier's example (Example~\ref{ledrappiers_example}).
Here, the main obstacle to obtaining a result like Theorem~\ref{second_fixed_point_uniformity_theorem}
is that~$\{\mathbf{n}\in\mathbb{Z}^2\mid|\fix(\alpha^\mathbf{n})|\leqslant N\}$ is infinite for all~$N\geqslant 1$. To see this, let~$k$ be any positive integer and note that~(\ref{what_youre_doing_in_my_head}) gives
\[
|\fix(\alpha^{(2^k,0)})|=
|t^{2^k}-1|_\infty
|t^{2^k}-1|_t
|t^{2^k}-1|_{t+1}=2^{2^k}\cdot 1\cdot 2^{-2^k}=1,
\]
since~$t^{2^k}-1=(t+1)^{2^k}$.
This shortage of periodic points arises for certain non-expansive elements of the action~$\alpha^{(x,y)}$ corresponding to integral points~$(x,y)$ on the axes and on the line~$x+y=0$. Such non-expansive automorphisms of zero-dimensional groups typically display erratic patterns of periodic point growth,
as shown in work of Chothi, Everest and the
second author~\cite{MR1461206}.
This complication could be avoided by imposing a restriction to only include those elements of the action that are expansive, that is, by replacing~$\chull(N)$ with the convex hull of
\[
\chull'(N)=\{\mathbf{n}\in\mathbb{Z}^2\mid\alpha^\mathbf{n}\mbox{ is expansive and }|\fix(\alpha^\mathbf{n})|\leqslant N\}.
\]
However, in this setting, the simple formula
\begin{equation}\label{equation:entropyandperiodicpoints}
|\fix(\alpha^\mathbf{n})|=e^{\ent(\mathbf{n})}
\end{equation}
applies to these expansive elements, so that
\[
\lim_{N\rightarrow\infty}\frac{\vol(\chull'(N))}{(\log N)^2}=\vol(U(\alpha))=\frac{3}{(\log 2)^2}
\]
follows immediately, and is not particularly surprising.
\end{example}

The reader may have noticed that
the last identity
obtained in Example~\ref{fudged_ledrappier} is more
closely related to the
geometry of the acting group than the
dynamical properties of the action.
This tension between complexity arising from the
properties of the acting group and dynamical
properties of the action is a constraint on
attempts to generalize results to~$\mathbb{Z}^d$-actions.
For example, the authors extended the dynamical
Mertens' theorem
and other orbit growth results
to nilpotent group actions~\cite{MR2465676}
but only where the action is a full shift, so
that the simple relationship~\eqref{equation:entropyandperiodicpoints}
holds for any lattice in the acting group.
For more complex actions only partial results
could be obtained for~$\mathbb{Z}^2$-actions, and
the authors use examples to show that
several new phenomena arise~\cite{MR2650793}.

\begin{example}
The assumption that the system satisfies the
descending chain condition is also essential in
Theorem~\ref{second_fixed_point_uniformity_theorem}.
To see this, consider the $\mathbb{Z}^2$-action generated
by the maps~$x\longmapsto2x$
and~$x\longmapsto3x$ on the solenoid dual to the
ring
\[
\mathbb{Z}_{(5)}
=
\{x\in\mathbb{Q}\mid\vert x\vert_5\le1\}
=
\{x=\textstyle\frac{a}{b}\in\mathbb{Q}\mid
a\in\mathbb{Z}, \gcd(5,b)=1\}.
\]
The resulting dynamical system has the well known $\times 2$, $\times 3$ example (Ex.~\ref{times2_times3_example}) as an irreducible factor, but has a significantly depleted supply of periodic points. To see this, consider the homomorphism $\chi:\mathbb{Z}^2\rightarrow\mathbb{F}_5^\times$ given by $\chi(x,y)=2^x 3^y\mod 5$. Since~\cite{MR2736895} shows that for any $(x,y)\in\mathbb{Z}^2$, $|\fix(\alpha^{(x,y)})|=|2^x 3^y-1|_5^{-1}$,
we have
\[
|\fix(\alpha^{(x,y)})|\neq1\Longleftrightarrow(x,y)\in\ker(\chi)=\{(j,j+4k):j,k\in\mathbb{Z}\}.
\]
In particular, the only points in~$\mathbb{Z}^2$ with non-trivial periodic points lie in a subgroup of index~$4$. Hence,
if~$(x,y)$ lies outside this subgroup,
then~$|\fix(\alpha^{(x,y)})|=1$, and
so~$\chull(N)$ is undefined for all~$N\in\mathbb{N}$.
\end{example}

\section{Synchronization points}

There is an essential change in perspective between relational entropy, as considered in the introduction, and the Fried average entropy, in the sense that one is an invariant of a generating set of~$d$ commuting automorphisms and the other is an invariant of the~$\mathbb{Z}^d$-action they generate. Although in the previous section it was shown that the Fried average entropy can be obtained naturally using period point counts, there does not appear to be an equally natural way to obtain relational entropy using this data. On the other hand, there is a straightforward way to define non-trivial invariants that capture shared periodic behaviour and reflect related growth rates for a set of generating maps, using a generalization of the notion of a periodic point, and this is the focus of
work of the first author~\cite{MR3074380}.

Let~$\Lambda$ denote any finite set of endomorphisms of a common phase space~$X$ and assume~$|\Lambda|\geq 2$. Two endomorphisms~$\alpha, \beta\in \Lambda$ have a pair of
\emph{synchronous orbits} at time~$n\in\mathbb{N}$ if
there exists~$x\in X$ such that~$\alpha^n(x)=\beta^n(x)$. Hence, define the set of \emph{weak synchronization points} for~$\Lambda$ at time~$n$ by
\[
\wsyn_n(\Lambda)=\{x\in X:\alpha^n(x)=\beta^n(x)\mbox{ for some }\alpha,\beta\in\Lambda, \alpha\neq\beta\},
\]
and the set of \emph{strong synchronization points} for~$\Lambda$ at time~$n$ by
\[
\ssyn_n(\Lambda)=\{x\in X:\alpha^n(x)=\beta^n(x)\mbox{ for all }\alpha,\beta\in\Lambda, \alpha\neq\beta\}.
\]
Clearly, for all~$n\in\mathbb{N}$,~$\ssyn_n(\Lambda)=\wsyn_n(\Lambda)$ when~$|\Lambda|=2$ and~$\ssyn_n(\Lambda)\subset\wsyn_n(\Lambda)$ more generally. If~$\Lambda$ contains the identity map,~$\ssyn_n(\Lambda)$ comprises the points simultaneously fixed by the~$n$th iterates of the non-trivial maps in~$\Lambda$, so
\[
\ssyn_n(\alpha,\id)=\wsyn_n(\alpha,\id)=\fix(\alpha^n).
\]
Consequently, synchronization points generalize periodic points. We are thus led to consider the existence and value of the growth rates
\[
\wrad_\Lambda=\lim_{n\rightarrow\infty}|\wsyn_n(\Lambda)|^{1/n}
\]
and
\[
\srad_\Lambda=\lim_{n\rightarrow\infty}|\ssyn_n(\Lambda)|^{1/n}.
\]
For example, if~$\Lambda=\{\times 2,\times 3\}$ and~$X$ is the solenoid dual to~$\mathbb{Z}[\frac{1}{6}]$ described in Example~\ref{times2_times3_example}, then~$|\wsyn_n(\Lambda)|=|\ssyn_n(\Lambda)|=|\{x\in X:2^nx=3^nx\}|=3^n-2^n$ and
\[
\wrad_\Lambda=\srad_\Lambda=\lim_{n\rightarrow\infty}(3^n-2^n)^{1/n}=3.
\]

For two
automorphisms~$\alpha$ and~$\beta$ of a compact abelian group, the
properties of the
sequence~$(|\wsyn_n(\alpha, \beta)|)$
can often be easily described. For example, when~$\alpha$ and~$\beta$ are two (not necessarily commuting) automorphisms of the 2-torus, Section~3 in~\cite{MR3074380} shows that the sequence~$(|\wsyn_n(\alpha, \beta)|)$ is given by a linear recurrence sequence exhibiting either exponential or polynomial growth (or a mixture of the two), depending on the nature of the defining automorphisms.

If~$\Lambda$ is simply a finite set of
two or more commuting homeomorphisms, then it is also possible to give a precise description of~$\wrad_\Lambda$ in terms of topological entropy, under the hypotheses of the following theorem.

\begin{theorem}[Miles~\cite{MR3074380}]
Let~$\Lambda$ be a finite set of commuting homeomorphisms of a compact metric space~$X$ such that for any two distinct maps~$\alpha, \beta\in\Lambda$, the homeomorphism~$\beta^{-1}\alpha$ is expansive. Then
\begin{equation}\label{synch_points_and_entropy_formula}
\wrad_\Lambda=\max_{\alpha,\beta\in\Lambda,\;\alpha\neq\beta}\{\exp(h(\beta^{-1}\alpha))\},
\end{equation}
where~$h$ denotes the topological entropy.
\end{theorem}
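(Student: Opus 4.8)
The plan is to convert synchronisation into ordinary periodicity, and then to read off the growth rate of a finite union from the growth rates of its members.

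The first and decisive step uses commutativity. For distinct~$\alpha,\beta\in\Lambda$ the maps commute, so $(\beta^{-1}\alpha)^n=\beta^{-n}\alpha^n$ and therefore
\[
\alpha^n(x)=\beta^n(x)\iff(\beta^{-1}\alpha)^n(x)=x.
\]
Thus the points at which the~$\alpha$- and~$\beta$-orbits synchronise at time~$n$ are precisely the $\gamma$-periodic points~$\fix(\gamma^n)$ for~$\gamma=\beta^{-1}\alpha$, giving
\[
\wsyn_n(\Lambda)=\bigcup_{\alpha\neq\beta}\fix\bigl((\beta^{-1}\alpha)^n\bigr).
\]
Since each~$\beta^{-1}\alpha$ is expansive, this reduces the theorem to a statement about periodic points of finitely many expansive homeomorphisms.

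Writing~$P_n(\gamma)=|\fix(\gamma^n)|$ and letting~$k$ be the (finite) number of pairs appearing, the trivial inequalities
\[
\max_{\alpha\neq\beta}P_n(\beta^{-1}\alpha)\le|\wsyn_n(\Lambda)|\le\sum_{\alpha\neq\beta}P_n(\beta^{-1}\alpha)\le k\max_{\alpha\neq\beta}P_n(\beta^{-1}\alpha)
\]
hold for every~$n$. Taking~$n$th roots, using that~$x\mapsto x^{1/n}$ is increasing (so the root of the maximum is the maximum of the roots) and that~$k^{1/n}\to1$, one sees that~$\wrad_\Lambda$ exists and equals~$\max_{\alpha\neq\beta}\lim_{n}P_n(\beta^{-1}\alpha)^{1/n}$ as soon as each individual limit exists. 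Everything therefore comes down to proving, for a single expansive homeomorphism~$\gamma$, that
\[
\lim_{n\to\infty}P_n(\gamma)^{1/n}=\exp\bigl(h(\gamma)\bigr).
\]

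This last equality is the crux, and the lower bound within it is the step I expect to be hardest. The upper bound~$\limsup_n\tfrac1n\log P_n(\gamma)\le h(\gamma)$ is routine: if~$c$ is an expansive constant for~$\gamma$, then two distinct points of~$\fix(\gamma^n)$ must satisfy~$d(\gamma^jx,\gamma^jy)>c$ for some~$0\le j<n$ (reducing the expansiveness index modulo~$n$ using periodicity), so~$\fix(\gamma^n)$ is an~$(n,c)$-separated set and its size is bounded by the maximal cardinality of such a set, whose exponential growth rate does not exceed~$h(\gamma)$. The genuine obstacle is the reverse inequality~$\liminf_n\tfrac1n\log P_n(\gamma)\ge h(\gamma)$: expansiveness makes periodic points isolated but does not on its own guarantee that there are enough of them to account for the full entropy. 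To close the gap I would invoke the structural theory of expansive systems, in the form of a specification-type property for~$\gamma$, to shadow entropy-carrying orbit segments by genuinely periodic orbits and thereby exhibit~$\exp((h(\gamma)-\varepsilon)n)$ distinct~$\gamma^n$-fixed points for each~$\varepsilon>0$ and all large~$n$. Combining the two bounds establishes the displayed limit, and feeding it back into the union estimate above yields formula~\eqref{synch_points_and_entropy_formula}, the maximum being attained by any pair maximising~$h(\beta^{-1}\alpha)$.
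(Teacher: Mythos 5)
Your reduction is the right one and matches the route taken in Miles's paper: commutativity converts synchronization at time~$n$ into membership of~$\fix\bigl((\beta^{-1}\alpha)^n\bigr)$, the union over the finitely many pairs is sandwiched between its largest term and~$k$ times that term, and everything comes down to the growth rate of periodic points of a single map~$\gamma=\beta^{-1}\alpha$. Your upper bound via~$(n,c)$-separated sets is also fine. The gap is exactly where you flagged it, and it cannot be closed from the stated hypotheses: expansiveness alone does \emph{not} imply any specification-type property, and the lower bound~$\liminf_n\frac1n\log|\fix(\gamma^n)|\ge h(\gamma)$ is simply false for general expansive homeomorphisms. Every subshift is expansive, and there exist minimal subshifts of positive topological entropy; such a system has no periodic points at all, so taking~$\Lambda=\{\id,\gamma\}$ with~$\gamma$ such a shift gives~$\wsyn_n(\Lambda)=\emptyset$ for all~$n$ while~$\max\{\exp(h(\beta^{-1}\alpha))\}=e^{h(\gamma)}>1$.

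What this reveals is that the statement as reproduced in the survey has shed a hypothesis: in the source~\cite{MR3074380} each~$\beta^{-1}\alpha$ is assumed to be expansive \emph{and} to satisfy the specification property. Under that combined assumption the classical result that periodic points of an expansive map with specification grow at exactly the entropy rate (Bowen; see Denker--Grillenberger--Sigmund) supplies precisely the lower bound you need, and your argument then goes through verbatim. So your outline is essentially the intended proof, but as written it silently upgrades ``expansive'' to ``expansive with specification''; you should either restore that hypothesis and cite the standard theorem, or accept that no structural theory will manufacture periodic points for an expansive system that has none.
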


When dealing with automorphisms of a compact abelian group, methods from commutative algebra provide formul\ae{\ }for counting synchronization points similar to those available for counting periodic points, using products over places of global fields. For example, if~$\Lambda$ is a finite set of automorphisms that generates a mixing~$\mathbb{Z}^d$-action on a solenoid,
then
a result of the first author~\cite[Sec.~4]{MR3074380} shows there are associated algebraic number fields~$\mathbb{K}_1,\dots,\mathbb{K}_r$, associated sets of places~$P_i$ of each field~$\mathbb{K}_i$, and distinguished elements~$\lambda_{\alpha, i}\in\mathbb{K}_i$,~$\alpha\in\Lambda$,~$1\leqslant i\leqslant r$, such that for any~$\alpha,\beta\in\Lambda$,~$\alpha\neq\beta$,
\begin{equation}\label{synch_point_counting_formula}
|\wsyn_n(\alpha,\beta)|=
\prod_{i=1}^{r}
\prod_{v\in P_i}
|\lambda_{\alpha, i}^n-\lambda_{\beta, i}^n|_v^{-1}.
\end{equation}
Amongst several useful consequences, this means that
Baker-type estimates become available once more to deal
with the erratic behaviour in~$|\wsyn_n(\alpha,\beta)|$ caused
by factors where~$|\lambda_{\alpha, i}|_v=|\lambda_{\beta, i}|_v$.

A typical assumption in the context of group automorphisms, that turns out to be somewhat weaker than the expansiveness assumption of the theorem above, is that
all but finitely many of the places of the fields~$\mathbb{K}_i$ appear in the counting formula~(\ref{synch_point_counting_formula}). Under this assumption,~\cite[Th.~4.7]{MR3074380} shows that~(\ref{synch_points_and_entropy_formula}) holds, and can be expressed as
\[
\wrad_\Lambda=\max_{\alpha,\beta\in\Lambda,\;\alpha\neq\beta}
\{
H_i(\lambda_{\alpha, i}:\lambda_{\beta, i})
\},
\]
where~$H_i$ denotes the projective height on~$\mathbb{P}^{1}(\mathbb{K}_i)$ (see~\cite[Sec.~4]{MR3074380}).

Turning our attention now to the growth rate of strong synchronization points, a simple non-trivial example is given in~\cite[Sec.~2]{MR3074380} with~$X=\mathbb{Z}[\frac{1}{6}]$ and
\[
\Lambda=\{\times 1,\times 2,\times 3\},
\]
and there it is shown that
\[
|\ssyn_n(\Lambda)|=\gcd(3^n-1,2^n-1).
\]
Such sequences feature in a circle of problems of active interest
in number theory and are less well understood than
linear recurrence sequences. In particular,
while it is well-known that~$\gcd(3^n-1,2^n-1)$ is unbounded,
it is not known whether or
not~$\gcd(3^n-1,2^n-1)=1$ infinitely often. Ailon and Rudnick conjecture~\cite{MR2046966} that for any muliplicatively independent non-zero integers~$a$,~$b$ with~$\gcd(a-1, b-1)=1$, there are infinitely many~$n\in\mathbb{N}$ such that~$\gcd(a^n-1, b^n-1)=1$.
Nonetheless, using estimates for
generalized greatest common divisors due to
Bugeaud, Corvaja and Zannier~\cite{MR2130274}, it is shown in~\cite[Sec.~5]{MR3074380} that~$\srad_\Lambda=1$ for this example, and that this minimal growth rate is the case more generally when~$|\Lambda|\geqslant 3$.
\begin{theorem}[Miles~\cite{MR3074380}]
Let~$\Lambda$ be a finite set of commuting automorphisms of a connected finite dimensional compact abelian group with~$|\Lambda|\geq 3$. If every map of the form~$\widehat{\alpha}^m\widehat{\beta}^n-\widehat{\gamma}^{m+n}$ is injective for all distinct~$\alpha,\beta,\gamma\in\Lambda$ and~$m,n$ not both zero, then~$\srad_\Lambda=1$.
\end{theorem}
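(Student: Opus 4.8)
The plan is to reduce the count of strong synchronization points to a generalized greatest common divisor and then invoke the estimates of Bugeaud, Corvaja and Zannier~\cite{MR2130274}. First I would fix a distinguished map~$\gamma\in\Lambda$ and use commutativity to write, for each~$\alpha\neq\gamma$,
\[
\ker(\alpha^n-\gamma^n)=\ker\bigl((\alpha\gamma^{-1})^n-\id\bigr)=\fix\bigl((\alpha\gamma^{-1})^n\bigr).
\]
Setting~$\phi_\alpha=\alpha\gamma^{-1}$, this gives
\[
\ssyn_n(\Lambda)=\bigcap_{\alpha\in\Lambda,\,\alpha\neq\gamma}\fix(\phi_\alpha^n),
\]
so strong synchronization points are exactly the simultaneous periodic points of the commuting automorphisms~$\phi_\alpha$. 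Since~$X$ is connected and finite-dimensional, its dual is a finite-rank torsion-free module; tensoring with~$\mathbb{Q}$ places us in the number-field setting underlying~\eqref{synch_point_counting_formula}, with algebraic number fields~$\mathbb{K}_i$ on each of which~$\widehat\phi_\alpha$ acts as multiplication by the $S$-unit~$\mu_{\alpha,i}=\lambda_{\alpha,i}/\lambda_{\gamma,i}$, and with~$|\fix(\phi_\alpha^n)|$ expressed as a product over places of local terms~$|\mu_{\alpha,i}^n-1|_v^{-1}$.

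Next I would translate the injectivity hypothesis into multiplicative independence. On the~$\mathbb{K}_i$-factor the operator~$\widehat\alpha^m\widehat\beta^n-\widehat\gamma^{m+n}$ is multiplication by~$\lambda_{\alpha,i}^m\lambda_{\beta,i}^n-\lambda_{\gamma,i}^{m+n}$, so its injectivity for all~$(m,n)\neq(0,0)$ is precisely the statement that~$\mu_{\alpha,i}^m\mu_{\beta,i}^n\neq1$ for all such~$(m,n)$; that is,~$\mu_{\alpha,i}$ and~$\mu_{\beta,i}$ are multiplicatively independent $S$-units in~$\mathbb{K}_i$ for every distinct pair~$\alpha,\beta\in\Lambda\setminus\{\gamma\}$. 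This is exactly the non-degeneracy required to apply the Bugeaud--Corvaja--Zannier generalized-gcd estimates, and it plays here the role that mixing plays elsewhere in preventing eigenvalue ratios from being roots of unity.

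The decisive use of the assumption~$|\Lambda|\geq3$ now appears: choosing two distinct maps~$\alpha,\beta\in\Lambda\setminus\{\gamma\}$, I would bound
\[
|\ssyn_n(\Lambda)|\leq\bigl|\fix(\phi_\alpha^n)\cap\fix(\phi_\beta^n)\bigr|,
\]
and identify the right-hand side, through the place-by-place analysis of the dual quotient of~$\widehat X$ by the submodule generated by~$\widehat\phi_\alpha^n-\id$ and~$\widehat\phi_\beta^n-\id$, as a generalized greatest common divisor of~$\mu_{\alpha,i}^n-1$ and~$\mu_{\beta,i}^n-1$ across the fields~$\mathbb{K}_i$ (in the motivating example this is literally~$\gcd(2^n-1,3^n-1)$). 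Applying the estimate of Bugeaud, Corvaja and Zannier to each multiplicatively independent pair then yields~$\log|\ssyn_n(\Lambda)|=o(n)$, so~$\srad_\Lambda\leq1$. The reverse inequality is immediate, since the identity of~$X$ lies in every~$\ssyn_n(\Lambda)$, forcing~$|\ssyn_n(\Lambda)|\geq1$ and hence~$\srad_\Lambda\geq1$; combining the two bounds gives~$\srad_\Lambda=1$.

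The main obstacle I anticipate is making the place-by-place identification rigorous: one must verify that the order of~$\widehat X$ modulo the sum of the two submodules is genuinely controlled by the minimum valuations at each place, correctly handling the multiplicities~$m(\mathfrak{p})$ of the factors, the denominators inherent in a solenoid, and the finitely many places where~$|\mu_{\alpha,i}|_v=1$. Once this dictionary between the intersection of fixed-point sets and a generalized gcd is established, the number-theoretic input does the essential work, and the necessity of~$|\Lambda|\geq3$ — guaranteeing at least two independent differences, so that a \emph{gcd} rather than a single exponentially growing fixed-point count governs the growth — becomes transparent.
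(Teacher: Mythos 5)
Your proposal follows essentially the same route as the paper: the key step in both is the inequality bounding~$|\ssyn_n(\Lambda)|$ by a product over places of minima of local terms (a generalized greatest common divisor of the two differences built from a distinguished triple~$\alpha,\beta,\gamma$), combined with the translation of the injectivity hypothesis into multiplicative independence so that the Bugeaud--Corvaja--Zannier estimate applies. The bookkeeping you flag as the main obstacle (multiplicities, exceptional places, solenoid denominators) is exactly what the constant~$C$ in the paper's stated inequality absorbs, so your outline matches the intended argument.
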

As well as the estimate provided by~\cite{MR2130274}, a crucial ingredient in the proof of the theorem above is the inequality
\[
|\ssyn_n(\Lambda)|
\leqslant
C\prod_{i=1}^r\prod_{v\in P_i}
\min\{|\lambda_{\alpha,i}^n-\lambda_{\gamma,i}^n|_v^{-1},|\lambda_{\beta,i}^n-\lambda_{\gamma,i}^n|_v^{-1}\},
\]
where~$C>0$ is a constant determined by~$\Lambda$,~$\alpha,\beta,\gamma\in\Lambda$ are any three distinct automorphisms, and~$P_i$ and
$\lambda_{\alpha, i},\lambda_{\beta, i},\lambda_{\gamma, i}$ are as defined for~(\ref{synch_point_counting_formula}).
Noteably, the second product on the right may be thought of as a generalized greatest common divisor.

It is also interesting to observe that \emph{synchronization point measures} may be defined in a similar way to periodic point measures, provided the maps in~$\Lambda$ are invertible. That is, we may define a~$\Lambda$-invariant measure by
\[
\smeas_{\Lambda,n}=|\ssyn_n(\Lambda)|^{-1}\sum_{x\in\ssyn_n(\Lambda)}\delta_x,
\]
where~$\delta_x$ is the Dirac mass at~$x$. The distribution properties of these synchronization point measures may be studied in a similar way to those of periodic point measures. In particular, under reasonable hypotheses,~$\smeas_{\Lambda,n}$ is seen to converge weakly to Haar measure when~$|\Lambda|=2$~\cite[Th.~5.5]{MR3074380}.  In contrast, if the conjecture of Ailon and Rudnick holds, then for the example given with~$\Lambda=\{\times 1,\times 2,\times 3\}$, the synchronization point measure~$\smeas_{\Lambda,n}$ is the Dirac mass at zero for infinitely many~$n\in\mathbb{N}$.

\section{Uniformities in mixing}

A map~$\alpha:X\to X$
preserving a measure~$\mu$
on a space~$X$ is mixing if
\[
\left\vert
\int f(x)g(\alpha^nx){\rm{d}}\mu(x)
-\int f{\rm{d}}\mu\int g{\rm{d}}\mu
\right\vert
\longrightarrow 0
\]
as~$n\to\infty$ for~$f,g\in L^2_{\mu}(X)$.
A well-known consequence of various kinds of desirable
dynamical properties for a smooth mixing map
on a compact manifold is a \emph{rate of mixing}.
This amounts to the description of a subclass
of functions~$\mathcal{C}$ (characterised by smoothness
properties, or in algebraic situations by decay of
Fourier coefficients, and so on), an appropriate
norm~$N$ on~$\mathcal{C}$, and a rate function~$\phi$
with~$\phi(n)\to 0$ as~$n\to\infty$
for which
\[
f,g\in\mathcal{C}
\Longrightarrow
\left\vert
\int f(x)g(\alpha^nx){\rm{d}}\mu(x)
-\int f{\rm{d}}\mu\int g{\rm{d}}\mu
\right\vert
\le
N(f)N(g)\phi(n)
\]
for all~$n\in\mathbb{N}$.
For a mixing~$\mathbb{Z}^d$-action by automorphisms of a compact
group several issues arise.
\begin{enumerate}
\item Describing smooth or H{\"o}lder classes of functions
is straightforward on the torus (or on nilmanifolds),
and in particular
there are convenient descriptions in
terms of Fourier coefficients. On different
groups (an infinite product of solenoids for example)
it is less clear how to do this. This already arises
for~$d=1$.
\item For~$d>1$ we have potentially very different properties:
there may be a rate of mixing in any different direction
but what can be said about possible uniformities as the
direction changes?
\item The classical Rokhlin problem, asking if mixing
forces mixing of all orders for a single measure-preserving
transformation, takes on a different shape for measure-preserving~$\mathbb{Z}^d$-actions.
On the one hand it is much simpler to answer: Ledrappier's
example shows that higher-order mixing is not
forced by mixing if~$d>1$. On the other hand, where
higher-order mixing does occur it seems to
be a by-product of subtle Diophantine
phenomena, and extending rate of mixing results to
multiple mixing brings in many difficulties.
\end{enumerate}
In this section we describe two types of result that start to
address this. There is an inevitable tension between strength
of statement and generality of the results, as will become clear.
The first result, taken from~\cite{MR2812960}
applies to all finite-dimensional
compact abelian groups but gives only
a weak description of the class of smooth
functions involved.

\begin{theorem}[Miles and Ward~\cite{MR2812960}]\label{main}
Let~$\alpha$ be a mixing entropy rank one~$\mathbb
Z^d$-action by automorphisms of a compact abelian group~$X$
satisfying the descending chain condition on.
Then there is a class of smooth functions~$\mathcal C(X)$
strictly containing the trigonometric polynomials, and a
function~$\phi=o(1)$ such that, for
any~$f,g\in\mathcal C(X)$ there is a constant~$C(f,g)$
with
\[
\left\vert\int f(x)g(\alpha^{\mathbf n}x){\rm d}\mu(x)
-\int f{\rm d}\mu\int g{\rm d}\mu\right\vert<C(f,g)\phi(\Vert\mathbf n\Vert),
\]
for all~$\mathbf{n}\in\mathbb{Z}^d$,
where~$\mu$ denotes the Haar measure on~$X$.
\end{theorem}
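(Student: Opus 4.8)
The plan is to pass to the dual module and reduce everything to Fourier analysis on~$X$. Writing~$M=\widehat{X}$ as a module over~$R_d=\laurent$, Pontryagin duality lets me expand~$f=\sum_{\chi}a_\chi\chi$ and~$g=\sum_{\psi}b_\psi\psi$ over characters~$\chi,\psi\in M$, where the dual automorphism~$\widehat{\alpha}^{\mathbf n}$ acts as multiplication by~$u^{\mathbf n}$, so that~$\psi\circ\alpha^{\mathbf n}=u^{\mathbf n}\psi$. Orthogonality of characters under Haar measure~$\mu$ gives~$\int\chi(x)(u^{\mathbf n}\psi)(x)\,d\mu(x)=1$ when~$\chi=-u^{\mathbf n}\psi$ in~$M$ and~$0$ otherwise, so that after subtracting the means (which extract the trivial-character coefficients~$a_0,b_0$) one is left with
\[
\int f(x)g(\alpha^{\mathbf n}x)\,d\mu(x)-\int f\,d\mu\int g\,d\mu
=\sum_{\psi\neq 0}a_{-u^{\mathbf n}\psi}\,b_{\psi}.
\]
Everything then hinges on showing that, as~$\Vert\mathbf n\Vert\to\infty$, the characters~$u^{\mathbf n}\psi$ are driven to be arithmetically complex uniformly in the direction of~$\mathbf n$, forcing~$a_{-u^{\mathbf n}\psi}$ to be small whenever~$f$ lies in a suitable decay class.

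To make this precise I would attach to each nonzero character a logarithmic height. Using the entropy rank one structure from Section~\ref{lyupunov_lists}, fix a prime filtration of~$M$ with factors~$R_d/\mathfrak{p}$; each such domain embeds in a global field~$\mathbb{K}(\mathfrak{p})$, and a character in that factor acquires the usual Weil height~$h(\cdot)$. The central observation is that the height of the unit~$\overline{u}^{\mathbf n}\in R_d/\mathfrak{p}$ is precisely the contribution of~$\mathfrak{p}$ to the directional entropy: by the Abramov formula~\eqref{directional_entropy_formula} one has~$h(\overline{u}^{\mathbf n})=\sum_{v}\max\{\mathbf n\cdot\boldsymbol{\ell}_v,0\}$ over the places~$v\in\mathcal{S}(\mathfrak{p})$. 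Since mixing of~$\alpha$ forces each associated prime separately to satisfy Schmidt's no-root-of-unity criterion, each factor defines a mixing action in its own right, so Theorem~\ref{entropy_bounds_theorem} applies factorwise and yields a single constant~$c>0$ with~$h(\overline{u}^{\mathbf n})\ge c\Vert\mathbf n\Vert$ for every~$\mathbf n$ and every factor. Subadditivity of the height together with~$h(\overline{u}^{-\mathbf n})=h(\overline{u}^{\mathbf n})$ then gives the key uniform growth estimate
\[
h(u^{\mathbf n}\psi)\ge h(\overline{u}^{\mathbf n})-h(\psi)\ge c\Vert\mathbf n\Vert-h(\psi).
\]

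With this in hand I would define~$\mathcal{C}(X)$ to be those~$f=\sum_\chi a_\chi\chi$ for which the weighted norm~$N(f)=\sum_\chi|a_\chi|\,e^{\ep h(\chi)}$ is finite, for a fixed small~$\ep>0$; since trigonometric polynomials have finite character support they all lie in~$\mathcal{C}(X)$, while rapidly decaying coefficients produce elements of infinite support, so the containment is strict. The theorem then follows by a direct estimate, using~$|a_\chi|\le N(f)e^{-\ep h(\chi)}$ and the growth bound above:
\[
\Bigl|\sum_{\psi\neq 0}a_{-u^{\mathbf n}\psi}b_\psi\Bigr|
\le\sum_{\psi\neq 0}N(f)e^{-\ep h(u^{\mathbf n}\psi)}|b_\psi|
\le N(f)e^{-\ep c\Vert\mathbf n\Vert}\sum_{\psi}|b_\psi|e^{\ep h(\psi)}
=N(f)N(g)\,e^{-\ep c\Vert\mathbf n\Vert},
\]
so one may take~$C(f,g)=N(f)N(g)$ and~$\phi(r)=e^{-\ep c r}=o(1)$. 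The step I expect to be the main obstacle is defining a height on the \emph{whole} module~$M$ that simultaneously controls all subquotient factors and behaves well under the~$R_d$-action: a character need not lie in a single filtration factor, and the fields~$\mathbb{K}(\mathfrak{p})$ carry independent height functions on different sets of places. Reconciling these into one norm for which~$h(u^{\mathbf n}\psi)\ge c\Vert\mathbf n\Vert-h(\psi)$ holds uniformly, rather than merely factorwise, is exactly where the construction becomes delicate, and is presumably why only a \emph{weak} description of~$\mathcal{C}(X)$ can be given at this level of generality; the factorwise application of Theorem~\ref{entropy_bounds_theorem} is what converts the pointwise non-vanishing of directional entropy into the linear, direction-independent lower bound that makes~$\phi$ uniform.
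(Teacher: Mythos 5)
A preliminary caveat on the comparison: this survey does not actually prove Theorem~\ref{main} --- it quotes it from~\cite{MR2812960} and only sketches the shape of the argument, presenting it as an essentially Diophantine result in which Baker-type lower bounds are needed to show that the mixing rate survives, at some uniform but slow rate, as~$\mathbf{n}$ passes close to the non-expansive directions where hyperbolicity is lost or nearly lost. Your route is genuinely different in spirit: you convert the lower bound of Theorem~\ref{entropy_bounds_theorem} directly into a Weil-height bound~$h(\overline{u}^{\mathbf{n}})\geqslant c\Vert\mathbf{n}\Vert$ (your identification of~$h(\overline{u}^{\mathbf{n}})$ with the~$\mathfrak{p}$-contribution to~$\ent(\mathbf{n})$ is correct, and is the best step in the sketch) and then let subadditivity of the height do all the work. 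For a cyclic module~$M=R_d/\mathfrak{p}$ this is a complete and clean proof, and it even yields an exponential~$\phi$, stronger than the stated~$o(1)$; the price is that~$\mathcal{C}(X)$ is defined by fiat as the class of functions whose Fourier coefficients decay against~$e^{\ep h(\cdot)}$, so the estimate is essentially built into the definition. Since the theorem literally only demands that~$\mathcal{C}(X)$ strictly contain the trigonometric polynomials this is admissible, but it sidesteps, rather than engages, the uniformity problem that the quoted result is really about, and the suspiciously strong rate is a symptom of that.

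The genuine gap is the one you flag at the end, and it is worse than ``delicate.'' Your height on~$M$ has to be built from a prime filtration~$0=M_0\subset\cdots\subset M_k=M$, and the inequality~$h(u^{\mathbf{n}}\psi)\geqslant c\Vert\mathbf{n}\Vert-h(\psi)$ is only available layer by layer. Mixing gives the no-unit-root condition, hence the linear lower bound on~$h(\overline{u}^{\mathbf{n}})$, only for the \emph{associated} primes; but a prime filtration may also contain factors~$R_d/\mathfrak{m}$ with~$\mathfrak{m}$ maximal (the paper points out exactly this possibility in Section~\ref{lyupunov_lists}: under mixing such~$\mathfrak{m}$ cannot be associated primes, but they can still occur as filtration factors). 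On such a finite layer~$\overline{u}^{\mathbf{n}}$ has finite multiplicative order, so~$h(\overline{u}^{\mathbf{n}})$ is bounded and your key estimate fails for every character in the corresponding (possibly infinite) stratum~$M_i\setminus M_{i-1}$. Repairing this --- for instance by pushing such~$\psi$ into a lower layer, or by restricting to connected~$X$ --- is precisely where the argument stops being soft, and is where the source~\cite{MR2812960} brings in the Diophantine input (Baker and Yu) that your sketch never needs. As written, the proposal establishes the theorem when every filtration factor is~$R_d/\mathfrak{p}$ with~$\mathfrak{p}$ a non-maximal associated prime, but not in the stated generality.
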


Just as in Theorem~\ref{first_fixed_point_uniformity_theorem},
this is essentially a Diophantine result. In order to see why
this is so, we briefly discuss what is involved in the
case of the~$\times2,\times3$
system~$(X,\alpha)$ from Example~\ref{times2_times3_example}.
Locally the group~$X$ is a product of
an open set in~$\mathbb{R}\times\mathbb{Q}_2\times\mathbb{Q}_3$,
and the action of~$\alpha^{(1,1)}$ is to multiply
by~$6$ in each direction. This map
is hyperbolic, dilating
distances in the real direction by a factor of~$6$,
in the~$2$-adic direction by a factor of~$\frac12$,
and in the~$3$-adic direction by a factor of~$\frac13$.
This hyperbolicity leads one to expect an
exponential rate of mixing for sufficiently
smooth functions under iteration of~$\alpha^{(1,1)}$.
By contrast, if~$\mathbf{n}$ is a large integer vector
chosen to lie on, or close to, one of the three
non-expansive directions then we lose (or almost
lose) hyperbolicity:
\begin{itemize}
\item if~$\mathbf{n}=(n,0)$ for some large positive~$n$, then
the corresponding automorphism multiplies by~$2^n$, which
dilates in the real direction by~$2^{n}$,
shrinks in the~$2$-adic direction by a factor~$2^{-n}$,
and acts as an isometry in the~$3$-adic direction.
\item if~$\mathbf{n}=(0,n)$ for some large positive~$n$, then
the corresponding automorphism multiplies by~$3^n$, which
dilates in the real direction by~$3^{n}$,
shrinks in the~$3$-adic direction by a factor~$3^{-n}$,
and acts as an isometry in the~$2$-adic direction.
\item if~$\mathbf{n}=(m,n)$ is chosen to lie very close to
the line~$2^x3^y=1$ for (say)~$x>0$ and~$y<0$, then
the corresponding automorphism multiplies by~$2^m3^n$.
This shrinks in the~$2$-adic direction by the
factor~$2^{-m}$, expands in the~$3$-adic direction
by the large factor~$3^n$, and in the real direction
multiplies by~$2^m3^n$, which is close to~$1$.
\end{itemize}
Thus in the first two situations we lose hyperbolicity,
and in the third we almost do. This slows the
rate of mixing in these directions, and
part of what is required to prove Theorem~\ref{main}
is checking that the rate of mixing survives
at some uniform rate across all these directions.

The second type of result is more specific about the
nature of the space but gives much stronger results
and in particular much more information about
both the class of smooth functions and the
rate of mixing. In our setting of~$\mathbb{Z}^d$-actions
by automorphisms of a compact group, the next result
applies to the case of a torus, but more generally
holds on nilmanifolds. What is remarkable about the next result
is that it not only gives an explicit rate of mixing,
it in fact gives an explicit rate of~$3$-fold mixing.

In order to state the result, which is due to Gorodnik and Spatzier~\cite{GS},
we recall that a compact nilmanifold is a quotient
space~$X=G/\Gamma$, where~$G$ is a simply connected
nilpotent group and~$\Gamma$ is a discrete and co-compact
subgroup. Write~$\aut_{\Gamma}(G)$ for the group of
continuous automorphisms of~$G$ that fix~$\Gamma$.
These automorphisms then induce maps on~$X$ that preserve
the measure~$\mu$ induced on~$X$ by the Haar measure
on~$G$, and we write~$\aut(X)$ for the group of
these maps, which are called automorphisms of the
nilmanifold~$X$. Fix a Riemannian metric on~$X$,
and finally define~$C^{\theta}(X)$ to be the
class of H{\"o}lder functions with exponent~$\theta$,
with corresponding H{\"o}lder norm~$\Vert\cdot\Vert_{\theta}$.
In the toral case,~$G=\mathbb{R}^k$,~$\Gamma=\mathbb{Z}^k$,~$\mu$ is
Lebesgue measure, and the metric is the usual metric on
the torus.

\begin{theorem}[Gorodnik and Spatzier~\cite{GS}]
Let~$X=G/\Gamma$ be a compact nilmanifold, and
let~$\alpha:\mathbb{Z}^d\longrightarrow\aut(X)$
be an action of~$\mathbb{Z}^d$ by automorphisms
of~$X$ with the property that for any~$\mathbf{n}\in\mathbb{Z}^d\setminus\{0\}$
the map~$\alpha^{\mathbf{n}}$ is ergodic with
respect to the measure~$\mu$ induced by Haar
measure on~$G$.
Then there exists a constant~$\eta(\theta)$
such that for any functions~$f,g,h\in C^{\theta}(X)$
we have a constant~$C$ with
\[
\left\vert
\int f(\alpha^{\mathbf{l}}x)
g(\alpha^{\mathbf{m}}x)
h(\alpha^{\mathbf{n}}x){\rm{d}}\mu(x)
-
\int f{\rm{d}}\mu\int g{\rm{d}}\mu\int h{\rm{d}}\mu
\right\vert
\]
\[
\le
C\left(\exp(\min\{\Vert\mathbf{l}-\mathbf{m}\Vert,
\Vert\mathbf{m}-\mathbf{n}\Vert,
\Vert\mathbf{l}-\mathbf{n}\Vert\})\right)^{\eta(\theta)}
\Vert f\Vert_{\theta}\Vert g\Vert_{\theta}\Vert h\Vert_{\theta}.
\]
\end{theorem}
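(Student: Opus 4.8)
The plan is to reduce the triple correlation to a Fourier-analytic estimate on a torus and then extract the rate from a quantitative no-resonance property of the action. First I would subtract means, so that without loss of generality $\int f\,{\rm d}\mu=\int g\,{\rm d}\mu=\int h\,{\rm d}\mu=0$; the $C^{\theta}$-norms dominate the means, and the cross terms produced by this normalisation collapse to two-fold correlations that the same method controls at one lower order. Using invariance of $\mu$ under each $\alpha^{\mathbf n}$, I would then substitute $x\mapsto\alpha^{-\mathbf n}x$ to rewrite the integral as $\int f(\alpha^{\mathbf l-\mathbf n}x)\,g(\alpha^{\mathbf m-\mathbf n}x)\,h(x)\,{\rm d}\mu(x)$, so that only the differences $\mathbf l-\mathbf n$, $\mathbf m-\mathbf n$, and $\mathbf l-\mathbf m$ enter, exactly matching the three pairwise separations on the right-hand side.

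In the toral case $G=\mathbb R^k$, $\Gamma=\mathbb Z^k$, I would expand $f,g,h$ in Fourier series and push each character forward by the dual action of the integer matrices representing $\alpha^{\mathbf l-\mathbf n}$ and $\alpha^{\mathbf m-\mathbf n}$ on the character lattice $\mathbb Z^k$. Orthogonality of characters collapses the integral to a weighted sum over frequency triples $(\mathbf a,\mathbf b,\mathbf c)\in(\mathbb Z^k)^3$ obeying a resonance relation of the shape $B_{\mathbf l-\mathbf n}\mathbf a+B_{\mathbf m-\mathbf n}\mathbf b+\mathbf c=0$, with weight $\widehat f(\mathbf a)\,\widehat g(\mathbf b)\,\widehat h(\mathbf c)$. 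H\"older regularity of exponent $\theta$ supplies polynomial decay $|\widehat f(\mathbf a)|\ll\Vert f\Vert_{\theta}\Vert\mathbf a\Vert^{-\theta}$, and similarly for $g$ and $h$, so the entire estimate reduces to understanding how the resonance set sits inside frequency space.

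The heart of the argument, and the step I expect to be the main obstacle, is a quantitative separation estimate: I would need to show that the ergodicity hypothesis forces every nonzero resonant triple to carry a frequency whose size grows exponentially in $\min\{\Vert\mathbf l-\mathbf m\Vert,\Vert\mathbf m-\mathbf n\Vert,\Vert\mathbf l-\mathbf n\Vert\}$. This is precisely where the Diophantine structure enters. Ergodicity of every $\alpha^{\mathbf n}$ prevents nontrivial multiplicative relations among the eigenvalues of the matrices, and these eigenvalues are algebraic numbers; an effective lower bound for linear forms in logarithms of the Baker type then converts this qualitative statement into the required exponential lower bound on the frequencies participating in any near-cancellation. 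Summing the H\"older decay bounds against this separation estimate, the resulting geometric series in the frequency size converges to a constant times the stated exponential rate.

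Finally, I would pass from the torus to a general compact nilmanifold by induction along the descending central series of $G$, viewing $X$ as an iterated torus bundle and decomposing $L^2(X)$ via the representation theory of the nilpotent group. Each successive abelian quotient contributes a toral layer to which the preceding analysis applies, and the automorphisms act compatibly with this filtration. The additional difficulty here is organisational rather than conceptual: one must track the uniformity of the constants and exponents through the finitely many layers so that a single $\eta(\theta)$ and a single $C$, depending on the three functions only through their H\"older norms, suffice for the whole nilmanifold.
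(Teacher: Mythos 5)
The paper does not prove this theorem: it is quoted verbatim as a result of Gorodnik and Spatzier~\cite{GS}, so there is no internal proof to compare your attempt against. Judged against the strategy of the cited work, your outline is pointing in the right direction: the reduction to pairwise differences, the Fourier expansion with H{\"o}lder decay of coefficients, and above all the identification of the key step as an \emph{effective} non-resonance estimate coming from Baker-type lower bounds for linear forms in logarithms (the quantitative counterpart of the $S$-unit argument that gives qualitative higher-order mixing) are all genuinely the core of the Gorodnik--Spatzier argument.

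Two points in your sketch are genuine gaps rather than omitted routine detail. First, the final summation step fails as stated: for $f\in C^{\theta}(\mathbb{T}^k)$ one only gets $|\widehat f(\mathbf a)|\ll\Vert f\Vert_{\theta}\Vert\mathbf a\Vert^{-\theta}$, and $\sum_{\mathbf a\neq 0}\Vert\mathbf a\Vert^{-\theta}$ diverges for $\theta\leqslant k$, so there is no convergent ``geometric series in the frequency size.'' The correct mechanism is a truncation: replace $f,g,h$ by trigonometric approximations with spectrum in a ball of radius $K$ and error $O(K^{-\theta})$ in the H{\"o}lder norm, show via the Baker-type separation estimate that no nonzero resonant triple exists with all frequencies below $K$ unless $K$ is exponentially large in the minimal pairwise separation, and then optimise $K$; this trade-off is exactly what produces the exponent $\eta(\theta)$ and its dependence on $\theta$. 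Second, the passage to nilmanifolds is more than ``organisational'': the iterated torus-bundle picture only handles the part of $L^2(X)$ coming from characters of successive abelian quotients, while the genuinely non-abelian part of the spectrum consists of infinite-dimensional representations induced from characters of the centre, and the mixing estimate there requires a separate argument (equidistribution/oscillation along central fibres) rather than a repetition of the toral computation. With those two repairs your plan matches the published proof in structure; without them the argument does not close.
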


\bibliographystyle{AIMS}
\bibliography{refs}

\end{document}